\documentclass[11pt]{article}

\usepackage[letterpaper,margin=1in]{geometry}
\usepackage{amsmath,amssymb,amsthm,mathtools}
\usepackage{microtype}
\usepackage[colorlinks=true,linkcolor=blue,urlcolor=blue]{hyperref}

\newtheorem{theorem}{Theorem}[section]
\newtheorem{corollary}[theorem]{Corollary}
\newtheorem{lemma}[theorem]{Lemma}
\theoremstyle{definition}

\theoremstyle{remark}
\newtheorem{remark}[theorem]{Remark}
\newtheorem{definition}[theorem]{Definition}

\newcommand{\Var}{\operatorname{Var}}
\newcommand{\Ent}{\operatorname{Ent}}

\newcommand{\Z}{\mathbb Z}

\newcommand{\R}{\mathbb R}
\newcommand{\tmix}{T_{\rm mix}}

\usepackage{color}
\definecolor{rafid}{RGB}{0,102,204}      
\definecolor{antonio}{RGB}{102,0,204}

\title{Optimal mixing of the systematic scan dynamics via approximate tensorization of entropy}

\author{Antonio Blanca\thanks{Department of CSE, Pennsylvania State University, ablanca@cse.psu.edu.  Research supported in part by NSF CAREER grant CCF-2143762.} \and Md Tahmidur Rafid\thanks{Department of CSE, Pennsylvania State University, mxr5997@psu.edu. Research supported in part by NSF CAREER grant CCF-2143762} 
}

\date{}

\begin{document}

\maketitle

\thispagestyle{empty}

\begin{abstract}
We study the mixing time of the systematic scan dynamics for high-dimensional discrete distributions. This Markov chain updates coordinates sequentially according to a fixed predetermined order, in contrast to the Glauber dynamics that updates coordinates selected uniformly at random. The systematic scan is often favored in practice because it exhibits strong empirical performance, but its theoretical analysis remains far less developed than that of Glauber dynamics.

We take a step toward addressing this imbalance by showing that two standard functional notions of weak dependence between the coordinates of the distribution provide strong convergence guarantees for the systematic scan dynamics. First, we show that approximate tensorization of entropy implies optimal $O(\log n)$ mixing time for every scan order under standard marginal, connectivity, and bounded interaction degree assumptions about the distribution. Second, we show that approximate tensorization of variance yields a constant-factor contraction of the variance functional per scan, which in turn implies an optimal $O(1)$ relaxation time for the natural additive and multiplicative reversibilizations of the systematic scan dynamics. Compared with our entropy result, the variance bound improves the dependence on the maximum interaction degree from exponential to quadratic and requires weaker assumptions on the distribution.

As concrete applications of our results, we establish optimal $O(\log n)$ mixing of the systematic scan dynamics for bounded-degree antiferromagnetic two-spin systems in the tree-uniqueness region and for the ferromagnetic $q$-state Potts model on square boxes in $\mathbb{Z}^2$ throughout its subcritical regime. 
In both settings, slow mixing begins at the corresponding threshold, so the parameter regimes captured by entropy and variance tensorization are best possible.
These and many further applications follow from the fact that well-studied conditions such as spectral independence and strong spatial mixing are known to imply approximate tensorization of entropy and variance under standard assumptions.
\end{abstract}

\vfill

 \pagebreak

\setcounter{page}{1}

\section{Introduction}
Generating samples from high-dimensional distributions is a fundamental problem in computer science, statistics, and engineering. 
Let $V$ be a set of $n$ coordinates or variables, and for each $v \in V$, let $\Sigma_v$ denote the finite set of possible values for $v$. 
We consider distributions $\mu$ supported on
\begin{align*}
 \Omega\subseteq\prod\nolimits_{v\in V}\Sigma_v.
\end{align*}
A variety of algorithmic techniques have been developed for sampling from $\mu$, with \textit{Gibbs sampling} being one of the most prominent ones.
This approach iteratively updates a single variable using a \emph{heat-bath update}.
A heat-bath update replaces the value at a single coordinate with a sample from the conditional distribution under $\mu$ given the configuration in all the other coordinates. Formally, the single-site heat-bath operator at $v \in V$ is given by the $|\Omega| \times |\Omega|$ transition matrix:
\begin{align*}
 P_v(\sigma,\tau)
 =\mathbf 1\{\sigma_{V\setminus\{v\}}
                  =\tau_{V\setminus\{v\}}\}
   \mu(\tau_v\mid\sigma_{V\setminus\{v\}}),
\end{align*}
for all $\sigma,\tau \in \Omega$. We use $\sigma_A$ to denote the configuration of $\sigma$ on $A \subset V$ and $\sigma_v$ for $\sigma_{\{v\}}$.

A natural question is how the Gibbs sampler selects the update coordinate in each iteration. The classical \textit{Glauber dynamics} Markov chain selects the update location independently and uniformly at random in each iteration.
An appealing alternative is the \textit{systematic scan dynamics}, which updates coordinates sequentially following a fixed ordering. 
This Markov chain is often favored in practice because it performs well empirically, as it requires less randomness, exploits hardware locality, and is amenable to parallelization.
However, the theoretical understanding of the speed of convergence of the systematic scan dynamics is far less developed than that of the Glauber dynamics. 
In this work, we are interested in quantifying how fast the systematic scan dynamics converges to $\mu$. 

The ergodicity 
and convergence to $\mu$ 
of both the Glauber and systematic scan dynamics
is guaranteed by mild assumptions on the support of $\mu$.
In particular, it suffices for
$\Omega$ to be connected under feasible single-coordinate changes; that is, any two configurations in $\Omega$ can be joined by a sequence of configurations in $\Omega$ 
such that consecutive configurations differ in exactly one coordinate.  We call this property \emph{global connectedness} and assume it henceforth. 

Sampling from $\mu$ can be computationally hard, even approximately, so the systematic scan dynamics cannot be expected to converge rapidly in general. On the other hand, a large body of work shows that, under suitable weak-dependence conditions and sufficiently local interactions, the Glauber dynamics converges optimally usually in $\Theta(n\log n)$ steps. An important and largely open question is whether the same type of conditions also guarantee optimal mixing for the systematic scan dynamics.

The locality of the distribution $\mu$ can be encoded by the commutation relations among its heat-bath operators. We call a graph $G=(V,E)$ a \emph{commutation graph} for $\mu$ if, for every pair of distinct nonadjacent coordinates $u$ and $v$, we have
$P_uP_v=P_vP_u$.
Many well-studied high-dimensional distributions fit within this framework. For instance, for vertex-spin systems, which include the classical Ising, Potts, hard-core, and proper coloring models, the underlying interaction graph is precisely the commutation graph for the associated Gibbs distribution. The framework also includes Holant-type models whose variables are indexed by edges, including $b$-matchings, edge covers, and even-subgraph. We discuss the generality of our setting in more detail in Sections~\ref{subsec:spin} and~\ref{subsec:holant}.
 
On the other hand, the weak dependence among the coordinates of $\mu$ can be captured by a broad family of conditions, ranging from bounds on the norms of various influence matrices to sophisticated spatial mixing and functional conditions; see, e.g.,~\cite{dyer2008dobrushin,Hayes2006,matrixNorms,Cesi,combView2004,ALO,chenliu2023optimal}. In this work, we focus on two standard functional notions of weak dependence: \emph{approximate tensorization of variance} and \emph{approximate tensorization of entropy}. To formalize these notions, we introduce some notation first.   
For a function $f:\Omega\to\R$, let
\begin{align*}
 \mu[f] &:= \mathbb E_\mu[f]
 =\sum_{\sigma\in\Omega}\mu(\sigma)f(\sigma),~\text{and}\\
\Var_\mu(f)&:=\mu[(f-\mu[f])^2] = \mu[f^2] - \mu[f]^2
\end{align*}
denote its mean and variance with respect to $\mu$. For a coordinate $i \in V$, let $\Var_i(f)$ denote the conditional variance of $f$ when all coordinates except $i$ are fixed.
Note that $\Var_i(f)$ is a function of the configuration in $V\setminus\{i\}$.
Let $\mu[\Var_i(f)]$ denote the corresponding average variance.
With this notation in place, we can now formalize the notion of  approximate tensorization of variance.
\begin{definition}[Approximate tensorization of variance]
\label{ass:VAT}
We say that $\mu$ satisfies $C$-approximate tensorization of variance when for every function $f:\Omega\to\R$ we have
\begin{equation}\label{eq:VAT:intro}
 \Var_\mu(f)
 \leq
 C\sum_{i=1}^n\mu[\Var_i(f)].
\end{equation}
\end{definition}
Our first result states that approximate tensorization of variance yields a uniform contraction of the variance functional for the systematic  dynamics. 
For a fixed order $\pi=(v_1,\ldots,v_n)$ of the coordinates in $V$,
we relabel each $v_i$ as $i$ and define the transition matrix of the systematic scan dynamics as $P_\pi = P_{1}P_{2} \dots P_{n}$.

\begin{theorem}
\label{thm:product:intro}
Suppose that $\mu$ satisfies $C$-approximate tensorization of variance and admits a commutation graph of maximum degree $\Delta$. For every scan order $\pi$ and every real function $f$,
\begin{equation}\label{eq:var-contraction:intro}
 \Var_\mu(P_\pi f)
 \leq  \Big(1 - \frac{1}{1+C\Delta^2}\Big)\Var_\mu(f).
\end{equation}
\end{theorem}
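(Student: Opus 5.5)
The plan is to work in $L^2(\mu)$, where each heat-bath operator $P_v$ is an orthogonal projection: it is self-adjoint, idempotent and preserves the mean. Set $g_{n+1}=f$ and $g_k=P_kg_{k+1}=P_k\cdots P_nf$, so that $g_1=P_\pi f$. Since $P_k$ is an orthogonal projection and $\mu[g_k]=\mu[g_{k+1}]$, Pythagoras gives
\begin{equation*}
\Var_\mu(g_{k+1})=\Var_\mu(g_k)+\|(I-P_k)g_{k+1}\|^2,\qquad \|(I-P_k)g_{k+1}\|^2=\mu[\Var_k(g_{k+1})].
\end{equation*}
Summing over $k$, the one-scan decay is
\begin{equation*}
\Var_\mu(f)-\Var_\mu(P_\pi f)=\sum_{k=1}^n \delta_k^2,\qquad \delta_k:=\|(I-P_k)g_{k+1}\|.
\end{equation*}
It then suffices to show $\Var_\mu(P_\pi f)\le C\Delta^2\sum_k\delta_k^2$. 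Rearranging $\Var_\mu(P_\pi f)\le C\Delta^2(\Var_\mu(f)-\Var_\mu(P_\pi f))$ gives exactly \eqref{eq:var-contraction:intro}. By approximate tensorization of variance (Definition~\ref{ass:VAT}) applied to $h=P_\pi f$, it is therefore enough to prove
\begin{equation*}
\sum_{i=1}^n \|(I-P_i)g_1\|^2\le \Delta^2\sum_{k=1}^n\delta_k^2 .
\end{equation*}

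To bound $(I-P_i)g_1$, write $A_i=P_1\cdots P_{i-1}$, so $g_1=A_ig_i$. Since $g_i=P_ig_{i+1}$ is in the range of $P_i$, we have $(I-P_i)g_i=0$, and hence
\begin{equation*}
(I-P_i)g_1=(I-P_i)A_ig_i-A_i(I-P_i)g_i=[A_i,P_i]\,g_i .
\end{equation*}
I would expand the commutator by the Leibniz rule as $\sum_{j<i}P_1\cdots P_{j-1}[P_j,P_i]P_{j+1}\cdots P_{i-1}$. By the commutation-graph hypothesis, only the terms with $j\in N(i)$ survive. Also $P_{j+1}\cdots P_{i-1}g_i=g_{j+1}$, and the prefix has operator norm at most $1$. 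This gives
\begin{equation*}
\|(I-P_i)g_1\|\le\sum_{j\in N(i),\,j<i}\big\|[P_j,P_i]\,g_{j+1}\big\|.
\end{equation*}

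The crux is to bound each commutator term by local decay quantities, ideally $\|[P_j,P_i]g_{j+1}\|\lesssim\delta_j$. The first identity I would use is $[P_j,P_i]=(I-P_j)(I-P_i)-(I-P_i)(I-P_j)$. Applied to $g_{j+1}$, its second piece is bounded by $\delta_j$. The first piece involves $(I-P_i)g_{j+1}$, which is not obviously local. Here I would try to exploit $P_jg_{j+1}=g_j$, splitting $g_{j+1}=g_j+(I-P_j)g_{j+1}$. This gives $[P_j,P_i]g_{j+1}=[P_j,P_i](I-P_j)g_{j+1}+(P_j-I)P_ig_j$. The first summand is at most $2\delta_j$ in norm (or $\delta_j$ with a sharper estimate, since $\|[P_j,P_i]\|\le1$ for two projections). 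For the second summand, I would use that the prefix $P_1\cdots P_{j-1}$ still acts and attempt to regroup these leftover pieces across the Leibniz sum so that they telescope or cancel. This is the step I expect to be the main obstacle, and getting the constant $\Delta^2$ rather than $2\Delta^2$ depends on it.

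Assuming a bound of the form $\|(I-P_i)g_1\|\le\sum_{j\in N(i)}\delta_j$, Cauchy--Schwarz over the at most $\Delta$ neighbours gives $\|(I-P_i)g_1\|^2\le\Delta\sum_{j\in N(i)}\delta_j^2$. Summing over $i$, each $\delta_j^2$ is counted at most $\Delta$ times, which yields $\sum_i\|(I-P_i)g_1\|^2\le\Delta^2\sum_j\delta_j^2$. Combined with the first paragraph, this completes the argument.
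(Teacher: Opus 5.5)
\textbf{There is a genuine gap at the crux of your proposal.} Your reduction matches the paper's: the telescoping identity $\Var_\mu(f)-\Var_\mu(P_\pi f)=\sum_k\delta_k^2$, approximate tensorization applied to $P_\pi f$, and the final Cauchy--Schwarz/double-counting step. The missing piece is the key inequality $\|(I-P_i)g_1\|\le\sum_{j\sim i,\,j<i}\delta_j$, which you only assume.

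The route you propose for it fails, because the Leibniz terms $[P_j,P_i]g_{j+1}$ are not individually controlled by $\delta_j$. Take the zero-field Ising model ($\beta>0$) on the path with edges $\{1,3\}$ and $\{3,2\}$, scan order $(1,2,3)$, and $f(\sigma)=\sigma_2$. Then $g_3=f$ and $g_2=g_1=\tanh(\beta)\sigma_3$, so $\delta_1=\|(I-P_1)g_2\|=0$. Yet $[P_1,P_3]g_2=-(I-P_1)P_3g_2$ with $P_3g_2=\tanh(\beta)\tanh(\beta(\sigma_1+\sigma_2))$, which depends on $\sigma_1$, so $\|[P_1,P_3]g_2\|>0$. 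The leftover piece $(I-P_j)P_ig_j$ that you isolate is exactly this uncontrolled quantity. The regrouping you defer is therefore the entire content of the step, not a bookkeeping detail.

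\textbf{The missing idea:} when $j\sim i$, do not commute $P_j$ past $I-P_i$. Instead peel one factor at a time and track the running quantity $\|(I-P_i)g_j\|$ for $j=1,\dots,i$.
\begin{itemize}
\item If $j\not\sim i$, then $(I-P_i)g_j=P_j(I-P_i)g_{j+1}$, so $\|(I-P_i)g_j\|\le\|(I-P_i)g_{j+1}\|$.
\item If $j\sim i$, write $P_j=I-(I-P_j)$ to get $(I-P_i)g_j=(I-P_i)g_{j+1}-(I-P_i)(I-P_j)g_{j+1}$. Hence $\|(I-P_i)g_j\|\le\|(I-P_i)g_{j+1}\|+\delta_j$, using the triangle inequality and $\|I-P_i\|\le 1$.
\end{itemize}
Iterating up to $j=i$, where $(I-P_i)g_i=0$, gives $\|(I-P_i)g_1\|\le\sum_{j\sim i,\,j<i}\delta_j$ with constant one. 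Your last paragraph then finishes the proof with exactly $\Delta^2$.

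In your own notation, the regrouping is this. The piece $Q_jQ_ig_{j+1}$ from $[P_j,P_i]=Q_jQ_i-Q_iQ_j$ recombines with $P_jQ_ig_{j+1}$ to give back $Q_ig_{j+1}$. This works only if neighbouring factors are dropped from the prefix, rather than kept in the full prefix $P_1\cdots P_{j-1}$ as in the Leibniz expansion.
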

For a reversible Markov chain, a variance contraction such as~\eqref{eq:var-contraction:intro} immediately yields a bound on the chain's \emph{relaxation time}. This quantity is the inverse of the \emph{spectral gap} of the transition matrix of the chain and governs its speed of convergence to stationarity from a ``warm start.''
The systematic scan dynamics is not necessarily reversible, so its additive and multiplicative reversibilization, $\frac{1}{2}(P_\pi + P_\pi^*)$ and $P_\pi^* P_\pi$, respectively, are often considered. Here
$P_\pi^* = P_{n}P_{{n-1}}\cdots P_{1}$ denotes the transition matrix
of the systematic scan dynamics with reversed scan order and corresponds to the adjoint of~$P_\pi$. We obtain  the following corollary.
\begin{corollary}
Under the assumptions of Theorem~\ref{thm:product:intro}, the additive and multiplicative reversibilizations of $P_\pi$ have relaxation time at most $2(1 + C\Delta^2)$. 
\end{corollary}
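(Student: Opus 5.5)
Both bounds will be derived from Theorem~\ref{thm:product:intro}, working in $L^2(\mu)$ restricted to mean-zero functions. Write $\lambda := 1 - \frac{1}{1+C\Delta^2}$ and $\|g\|^2 := \Var_\mu(g)$. Each $P_v$ is a self-adjoint projection in $L^2(\mu)$ that fixes constants, so $P_\pi$ and $P_\pi^* = P_n\cdots P_1$ preserve the mean-zero subspace. On that subspace $\Var_\mu(g) = \langle g,g\rangle_\mu$. Theorem~\ref{thm:product:intro} therefore reads $\|P_\pi f\|^2 \le \lambda \|f\|^2$ for every mean-zero $f$, i.e.\ $\|P_\pi\|_{op} \le \sqrt{\lambda}$ on mean-zero functions. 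Both reversibilizations are self-adjoint, so their relaxation time is the inverse of $1$ minus the largest eigenvalue on the mean-zero subspace (the spectral gap). I will show that this gap is at least $\frac{1}{2(1+C\Delta^2)}$.

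For the multiplicative reversibilization $P_\pi^*P_\pi$, we have $\langle P_\pi^*P_\pi f,f\rangle_\mu = \|P_\pi f\|^2 \le \lambda\|f\|^2$. Since $P_\pi^*P_\pi$ is positive semidefinite, its eigenvalues on mean-zero functions lie in $[0,\lambda]$. The gap is then at least $1-\lambda = \frac{1}{1+C\Delta^2}$, so the relaxation time is at most $1+C\Delta^2 \le 2(1+C\Delta^2)$.

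For the additive reversibilization, $\langle \tfrac12(P_\pi+P_\pi^*)f,f\rangle_\mu = \langle P_\pi f,f\rangle_\mu \le \|P_\pi f\|\,\|f\| \le \sqrt{\lambda}\,\|f\|^2$ by Cauchy--Schwarz. The same bound holds in absolute value, so every eigenvalue on the mean-zero subspace lies in $[-\sqrt\lambda,\sqrt\lambda]$. Using $\sqrt{\lambda} \le 1 - \frac{1-\lambda}{2}$, which is equivalent to $(1-\sqrt\lambda)^2\ge 0$, the gap is at least $\frac{1-\lambda}{2} = \frac{1}{2(1+C\Delta^2)}$. This gives relaxation time at most $2(1+C\Delta^2)$, which is where the factor $2$ in the statement comes from.

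The main care needed is the identification of $P_\pi^*$ as the $L^2(\mu)$-adjoint of $P_\pi$, which follows since each heat-bath $P_v$ is reversible with respect to $\mu$. Once that and the restriction to mean-zero functions are in place, the argument is a direct application of Theorem~\ref{thm:product:intro} with no real obstacle. If the paper defines relaxation time via the absolute spectral gap, the bound above still applies, since it controls the eigenvalues in absolute value.
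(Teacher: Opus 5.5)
Your proof is correct and follows the paper's route. The paper gets the corollary from Theorem~\ref{thm:product:intro} plus the standard fact (cited from Fill) that variance contraction at rate $\delta$ gives spectral gaps at least $1-\sqrt{1-\delta}$ and $\delta$ for the additive and multiplicative reversibilizations, then uses $1-\sqrt{1-\delta}\geq\delta/2$; you simply prove that cited fact inline via the operator-norm and Cauchy--Schwarz bounds on the mean-zero subspace.
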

In particular, when $C$ and $\Delta$ are independent of the dimension $n$, this corollary yields an optimal $O(1)$ bound on the relaxation time of these chains.

Our second result provides tight \emph{mixing time} bounds for the systematic scan dynamics under a condition analogous to~\eqref{eq:VAT:intro} but for the entropy functional. 
The {mixing time} is a stronger measure of convergence to stationarity than the relaxation time, and captures
the speed of convergence to stationary from a \emph{worst-case} starting state. 
Formally, we let $P_\pi^t(\sigma,\cdot)$ denote the distribution of the Markov chain after $t$ steps starting from $\sigma \in \Omega$. The {\it mixing time} of the chain is defined as 
\[
\tmix(P_\pi) = \max\limits_{\sigma \in \Omega}\min \left\{ t \ge 0 : {\|P_\pi^t(\sigma,\cdot)-\mu\|}_{\textsc{tv}} \le 1/4 \right\},
\]
where $\|\cdot\|_{\textsc{tv}}$ denotes the total variation distance.
Turning our attention to entropy, for a function $f:\Omega\to\mathbb R_{\geq0}$, the entropy of $f$ with respect to $\mu$ is given by
\begin{align*}
\Ent_\mu( f) = \mu\left[f \cdot \log \left(\frac{f}{\mu[f]}\right)\right] = \mu[f \cdot \log f] - \mu[f] \cdot \log \mu[f];
\end{align*}
we use the conventions that $0\log0=0$ and $\Ent_\mu(0)=0$. As for variance, for $i \in V$, we let $\Ent_i(f)$ denote the conditional entropy of $f$ when all coordinates except $i$ are fixed and we let $\mu[\Ent_i(f)]$ denote the corresponding averaged entropy.

\begin{definition}[Approximate tensorization of entropy]
\label{ass:EAT}
We say that $\mu$ satisfies $C$-approximate tensorization of entropy when for every function $f:\Omega\to\R_{\ge 0}$ we have
\begin{equation*}
 \Ent_\mu(f)
 \leq
 C\sum_{i=1}^n\mu[\Ent_i(f)].
\end{equation*}
\end{definition}
We require two additional conditions on the distribution $\mu$. The first requires all single-coordinate conditional probabilities to be uniformly bounded away from zero, while the second ensures that the connectivity of the state space is preserved under conditioning. Both conditions are standard in prior work deriving algorithmic consequences from approximate tensorization of entropy; see, e.g.,~\cite{chenliu2023optimal,BCCPSV22}.

\begin{definition}[$b$-marginal boundedness]
\label{def:b-marginal}
Let $b\in(0,1)$. We say that $\mu$ is \emph{$b$-marginally bounded}
if, for every $U\subseteq V$, every configuration $\tau$ on $U$ with
$\mu(\sigma_U=\tau)>0$, every $i\notin U$, and every $a\in\Sigma_i$
that is feasible under this conditioning, we have
\begin{equation*}
 \mu(\sigma_i=a\mid\sigma_U=\tau)\geq b.
\end{equation*}
\end{definition}

\begin{definition}[Total connectedness]
\label{def:total-connectedness}
We say that $\mu$ is \emph{totally connected} if, for every nonempty set $S\subseteq V$ and every feasible configuration $\tau$ on $V \setminus S$, i.e, $\mu(\sigma_{V\setminus S}=\tau)>0$, the conditional support
\[
 \Omega_S^\tau
 :=
 \Big\{
   \eta\in\prod\nolimits_{v\in S}\Sigma_v:
   \mu\!\left(
     \sigma_S=\eta
     \,\middle|\,
     \sigma_{V\setminus S}=\tau
   \right)>0
 \Big\}
\]
is connected by feasible single-coordinate changes. 
\end{definition}

We are now ready to state our next result.
\begin{theorem}
\label{thm:entropy}
Suppose $\mu$ satisfies $C$-approximate tensorization of entropy, is totally connected, admits a commutation graph of maximum degree $\Delta$, and 
is $b$-marginally bounded.  For every scan order $\pi$, we have
\begin{equation*}
\tmix(P_\pi ) \leq 2(1 + CA)\big[ \log n +  \log\big(8 \log(1/b) \big)\big]
\end{equation*}
where $A = \Delta
\left(\frac{6\log(1/b)}{b^6}\right)^\Delta$.
\end{theorem}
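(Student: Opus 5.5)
We will prove an entropy contraction for one full scan:
\[
\Ent_\mu(P_\pi^* f)\le \Big(1-\frac{1}{1+CA}\Big)\Ent_\mu(f)\qquad\text{for all } f\ge 0 .
\]
This is equivalent to a contraction of the relative entropy $D(\nu P_\pi\,\|\,\mu)$ for distributions $\nu$, with $f=d\nu/d\mu$. Iterating $t$ times from a point mass $\delta_\sigma$ uses two facts. First, $D(\delta_\sigma\|\mu)=\log(1/\mu(\sigma))\le n\log(1/b)$, by the chain rule and $b$-marginal boundedness. Second, by Pinsker, $D(\nu P_\pi^t\|\mu)\le 1/8$ suffices for total variation at most $1/4$. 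Together these give $t\le (1+CA)\log(8n\log(1/b))$, which is within the stated bound with the factor $2$ to spare.

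For the one-scan contraction, set $f_k=P_kP_{k+1}\cdots P_n$ applied to the density; more precisely, track $g_k=P_k^{*}\cdots$ in the adjoint direction so that each step is a single heat-bath update. Since each $P_i$ is a self-adjoint projection (conditional expectation), the chain rule for entropy gives the exact identity
\[
\Ent_\mu(g)-\Ent_\mu(P_i g)=\mu[\Ent_i(g)].
\]
Telescoping over the scan, the total entropy drop in one scan equals $\sum_k \mu[\Ent_{k}(g_{k-1})]$, where $g_{k-1}$ is the function just before coordinate $k$ is updated. By $C$-approximate tensorization, it then suffices to show
\[
\sum_i \mu[\Ent_i(g_0)]\le A\sum_k \mu[\Ent_k(g_{k-1})] + (\text{terms absorbed}),
\]
which yields $\Ent(g_0)\le CA\cdot(\text{drop})$. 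Hence $\Ent(g_n)\le \Ent(g_0)-\Ent(g_0)/(CA)$, and a slightly more careful accounting gives the factor $1-1/(1+CA)$.

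The key comparison, and the main obstacle, is bounding $\mu[\Ent_i(g_0)]$ by the local entropies actually dissipated. This is where the commutation graph enters. Between time $0$ and the update of $i$, the only updates that fail to commute with $P_i$ are those at neighbors of $i$ (at most $\Delta$ of them). Commuting updates at non-neighbors can be moved past the update at $i$, since a projection commuting with $P_i$ does not increase $\mu[\Ent_i(\cdot)]$ by Jensen. This leaves $\Ent_i$ of $g_0$ compared with $\Ent_i$ of $g_0$ after at most $\Delta$ neighbor updates. I would bound this by a block-factorization argument on the set $B=\{i\}\cup N(i)$ with $|B|\le\Delta+1$. Conditionally on the outside configuration, the local distribution on $B$ has all atom probabilities at least $b^{\Delta+1}$, and its support is connected by total connectedness. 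Hence a crude local entropy-contraction or log-Sobolev-type inequality holds for the heat-bath updates on $B$, with constant of order $\big(\log(1/b)/b^{6}\big)^{\Delta}$. This gives
\[
\mu[\Ent_i(g_0)]\le A'\Big(\mu[\Ent_i(g_{i-1})]+\sum_{j\in N(i),\,j<i}\mu[\Ent_j(g_{j-1})]\Big).
\]
Summing over $i$, each dissipation term is counted at most $\Delta+1$ times, so this produces the factor $\Delta\cdot(6\log(1/b)/b^6)^\Delta=A$.

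The delicate part is the local comparison itself: relating the entropy of $g_0$ along coordinate $i$ to entropies along coordinates of successively updated functions. I would try first to reduce this to a finite state space of size at most $|\Sigma|^{\Delta+1}$. There I would use the standard fact that on a connected finite chain with minimum transition probability $b$, relative entropy is comparable to the Dirichlet form with explicit loss $\log(1/b)/b^{O(1)}$ per step, and then chain these comparisons through the $\Delta$ intermediate updates, picking up one factor per update.
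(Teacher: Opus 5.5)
Your reduction from a one-scan entropy contraction to the mixing bound is correct and matches the paper's use of \eqref{eq:mixing}: initial entropy $\log(1/\mu(\sigma))\le n\log(1/b)$, Pinsker, iteration, and applying the contraction to $P_\pi^*$, which is itself a scan for the reversed order.

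The gap is in the contraction. You apply approximate tensorization to the \emph{input} $g_0$, so you need $\sum_i\mu[\Ent_i(g_0)]\le A\sum_k\mu[\Ent_k(g_{k-1})]$. This fails for every $n$-independent $A$, even when $\Delta=0$. Take $\mu$ uniform on $\{0,1\}^n$ and $f=1+\varepsilon\prod_k(2x_k-1)$. The first update (at site $1$) already makes the function constant, so the total dissipation is $\mu[\Ent_1(f)]$. But $\sum_i\mu[\Ent_i(f)]=n\,\mu[\Ent_1(f)]$. In particular, your local comparison $\mu[\Ent_i(g_0)]\le A'\bigl(\mu[\Ent_i(g_{i-1})]+\cdots\bigr)$ fails for every $i\ge2$, since its right-hand side is $0$.

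The underlying problem is that the commutation monotonicity $\mu[\Ent_i(P_jh)]\le\mu[\Ent_i(h)]$ points the wrong way for your argument. It says a commuting update cannot \emph{create} entropy at $i$. It does not stop such an update from \emph{destroying} entropy at $i$ without that loss being charged to the update at $i$. So non-neighbor updates occurring before $i$ cannot be ``moved past'' $i$ when you are upper-bounding the entropy at $i$ present at the start of the scan.

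The missing idea is to apply tensorization to the \emph{output}: $\Ent_\mu(P_\pi f)\le C\sum_i\mu[\Ent_i(P_\pi f)]$. You then bound the residual entropy at $i$ by peeling off the projections applied to the function \emph{after} $P_i$ (in $P_{1:n}f$ these are $P_1,\dots,P_{i-1}$). Now Lemma~\ref{lem:commuting-entropy} goes the right way and removes non-neighbors for free.

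For a neighbor $j$, you combine three ingredients:
\begin{itemize}
\item block domination, $\mu[\Ent_i(\cdot)]\le\mu[\Ent_{\{i,j\}}(\cdot)]$;
\item the chain rule $\mu[\Ent_{\{i,j\}}(P_{j:n}f)]=\mu[\Ent_{\{i,j\}}(P_{j+1:n}f)]-\mu[\Ent_j(P_{j+1:n}f)]$;
\item the two-site factorization of Lemma~\ref{lem:two-site}.
\end{itemize}
Each neighbor thus costs a factor $6\log(1/b)/b^6$ and charges $\mu[\Ent_j(P_{j+1:n}f)]$, which is the dissipation of $j$'s own update. On reaching $P_i$, the residual is exactly $\mu[\Ent_i(P_{i:n}f)]=0$.

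Summing over $i$ charges each dissipation at most $\Delta$ times. Telescoping then gives $\Ent_\mu(P_\pi f)\le CA\bigl(\Ent_\mu(f)-\Ent_\mu(P_\pi f)\bigr)$, which is precisely the factor $1-1/(1+CA)$. This needs only the two-site inequality iterated once per earlier-scanned neighbor, not a log-Sobolev-type inequality on the whole block $\{i\}\cup N(i)$.
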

In particular, if $C$ and $\Delta$ are bounded independently of $n$ and $b$ is uniformly bounded away from zero, then Theorem~\ref{thm:entropy} gives
$\tmix(P_\pi)=O(\log n)$. This logarithmic dependence on $n$ is optimal even for bounded-degree spin systems on bipartite graphs. Indeed, for the ferromagnetic Ising model on connected bounded-degree bipartite graphs, \cite[Lemma~4.1]{blanca2026rapid} establishes an $\Omega(\log n)$ lower bound for the multiplicative reversibilization of the \emph{alternating scan dynamics}, that corresponds to the systematic scan dynamics for an ordering $\pi$ that updates all vertices on one side of the bi-partition before updating those on the other side. The same lower bound transfers directly to the underlying nonreversible systemic scan dynamics.

Both Theorems~\ref{thm:entropy} and~\ref{thm:product:intro} also identify sharp parameter regimes for rapid mixing. For the hard-core and antiferromagnetic Ising models, approximate tensorization of entropy holds throughout the tree-uniqueness region, while approximate sampling is NP-hard outside of this regime~\cite{sly2012computational}; as such, the systematic scan dynamics provides an efficient sampler throughout the tractable side of this computational transition. 
In other settings where there is not underlying hardness, like the ferromagnetic Potts model on square boxes in $\mathbb Z^2$,
the analogous sharpness is dynamical. Specifically, approximate tensorization of entropy and variance hold throughout the subcritical regime, while systematic scan is exponentially slow outside of this regime~\cite{gheissari2018mixing,gaitonde2026comparison}. Thus, in both settings, these tensorization conditions reach the natural boundary for rapid mixing.

Before expanding on our proof ideas and applications of Theorems~\ref{thm:product:intro} and~\ref{thm:entropy}, we briefly place our results in the context of earlier work on systematic scan dynamics. One line of research compares the convergence rates of Glauber and systematic scan dynamics at varying levels of generality; see~\cite{amit1996convergence,gaitonde2026comparison,dyer2006systematic,diaconis2000analysis,bricmont1996statistical,chlebicka2025solidarity,guo2018layerwise}. These comparison results typically yield mixing time bounds for the systematic scan dynamics of order 
$O(n)$ steps or worse, and a step of the systematic scan dynamics corresponds to $n$ single-site heat-bath updates. 

A parallel line of work directly establishes sublinear, often polylogarithmic in $n$, mixing under conditions on the model parameters or the underlying graph structure~\cite{dyer2008dobrushin,Hayes2006,matrixNorms,blanca2026mixing,guo2018layerwise}, or under monotonicity assumptions~\cite{blanca2026rapid,BlancaCS18,peres2013can}. These results generally require substantially stronger hypotheses than approximate tensorization of entropy or variance and consequently apply only in narrower, often suboptimal, parameter regimes. For the Glauber dynamics, recent advances based on approximate tensorization of entropy have dramatically enlarged the parameter regimes in which optimal mixing is known, in some cases reaching the natural uniqueness/hardness threshold; see, e.g.,~\cite{chenliu2023optimal,feng2022rapid,chenfengyinzhang2024all}. Theorem~\ref{thm:entropy} does the same for the systematic scan dynamics, allowing us to convert approximate tensorization results into optimal $O(\log n)$ mixing guarantees and thereby substantially closing the parameter gap left by earlier direct analyses.

\subsection{Proof ideas}

We first explain the proof of the variance contraction in Theorem~\ref{thm:product:intro}, where the mechanism is most
transparent.  
First, $C$-approximate tensorization of variance 
gives
\begin{equation}\label{eq:atv:po}
 \Var_\mu(P_\pi f)
 \leq
 C\sum_i\mu[\Var_i(P_\pi f)].
\end{equation}
By the law of total variance,
\begin{equation*}
 \mu[\Var_i(g)] = \Var_\mu(g)-\Var_\mu(P_i g),
\end{equation*}
so $\mu[\Var_i(g)]$ is exactly the progress made by an update at $i$ when
progress is measured by variance contraction. Therefore, we see from~\eqref{eq:atv:po} that approximate tensorization
reduces the problem to measuring the progress 
that the $P_i$'s could still make
after one full scan.
We do this by bounding each term $\mu[\Var_i(P_\pi f)]$ in the right-hand side of ~\eqref{eq:atv:po} with a ``peeling argument''.

Suppose $i > 1$ and that $1$ is not a neighbor of $i$.  
Then $P_1$ and $P_i$ commute, so applying the two updates in either order gives the same result.  Viewed
in terms of variance contraction, applying $P_1$ first may remove some of
the variance that $P_i$ would otherwise remove, but it cannot create
additional variance for $P_i$ to remove.  
This is captured by the inequality 
\[
 \mu[\Var_i(P_1g)]
 \leq
 \mu[\Var_i(g)].
\]
Thus, $P_1$ can be ``peeled out'' from $\mu[\Var_i(P_\pi f)]$; i.e., $\mu[\Var_i(P_\pi f)] \le \mu[\Var_i(P_2\dots P_n f)]$. 
This is possible when $P_1$ and $P_i$
commute; for non-commuting Markov operators, a preliminary update can
create additional variance for $P_i$ to remove. 
(Simple low-dimensional examples indeed show that the inequality
$\mu[\Var_i(P_1g)]\leq\mu[\Var_i(g)]$ can fail without commutation.)
As such, if $1$ is adjacent to $i$, it cannot be removed for free, but it is possible to control its contribution via the conditional variance removed when $P_1$ is applied. 
We then continue to peel the product until $P_i$ is reached,
at which point the conditional variance at $i$ vanishes (i.e., $\mu[\Var_i(P_i \dots P_n f)] = 0$). In aggregate, we obtain an inequality of the form
\[
 \mu[\Var_i(P_\pi f)]
 \leq
 \Delta
 \sum_{{j<i,j\sim i}}
 \mu[\Var_j(P_{j+1:n}f)].
\]
Summing over $i$ incurs a second factor of $\Delta$, and the resulting
sum telescopes to
$\Var_\mu(f)-\Var_\mu(P_\pi f)$, which, together
with~\eqref{eq:atv:po}, gives the result.   

The entropy contraction proof uses the same insight and high-level structure. The mixing time bound in Theorem~\ref{thm:entropy} is obtained from an entropy contraction analog to~\eqref{eq:var-contraction:intro}.
This entropy contraction can be obtained by noting that
\[
 \mu[\Ent_i(g)]
 =\Ent_\mu(g)-\Ent_\mu(P_i g),
\]
and we can show again that commutation implies
\[
 \mu[\Ent_i(P_1g)]\leq\mu[\Ent_i(g)];
\]
for entropy though, this inequality is not immediate, but related estimates appeared in \cite{CaputoParisi,BCCPSV22}. 
For neighboring non-commuting updates, the resulting bound is weaker
than in the variance case, 
and it is controlled instead  through the conditional entropy on the
pair $\{i,j\}$. This is where the $b$-marginal boundedness
and total-connectedness assumptions come in to play.

\subsection{Applications to spin systems}
\label{subsec:spin}

We investigate next applications of our main results to spin systems. In particular, we consider
conditions under which approximate
tensorization of entropy and variance are known to hold in this context. Our two principal criteria are \emph{spectral independence} and \emph{strong spatial mixing}. We define these notions next.

General $q$-state spin systems form a general family of high-dimensional distributions. Fix an integer $q\geq 2$ and let $    [q]=\{1,\ldots,q\}$. Let $G=(V,E)$ be an undirected graph, and for every edge $\{x,y\}\in E$, let $A_{xy}\in\mathbb R_{\geq 0}^{q\times q}$ be a symmetric interaction matrix. For every $x\in V$, let $B_x\in\mathbb R_{\geq 0}^{q}$
be an external-field vector.
The Gibbs distribution $\mu$ on
$\Omega \subseteq [q]^V$ assigns to each configuration $\sigma\in\Omega$ probability
\[
    \mu(\sigma)=\frac{ \prod_{\{x,y\}\in E}
        A_{xy}(\sigma_x,\sigma_y)
    \prod_{x\in V} B_x(\sigma_x)}{Z_G},
\]
where $Z_G$ is the normalizing constant known as the partition function. Classical examples of spin systems include the Ising and Potts models, the hard-core model and proper $q$-colorings. 
The graph $G$ in the specification of the spin system corresponds to the commutation graph of the heat-bath operators.

\subsubsection{Spectral independence}
\label{sec:si-applications}

Spectral independence controls the cumulative influence among coordinates under pinnings. To formally define these notions, we require additional notation. Let 
$$
    \Omega_U
    =\big\{\tau\in \prod\nolimits_{x\in U} \Sigma_x:\mu(\sigma_U = \tau)>0\big\}.
$$
For $\tau\in\Omega_U$, let $\mu^\tau$ denote the conditional distribution on $V\setminus U$ given $\tau$, and set
\[
    X^\tau
    =\{(x,a) \in (V\setminus U)\times \Sigma_x: \mu^\tau(\sigma_x=a)>0\}.
\]

\begin{definition}[Influence matrix]
Let
$J^\tau\in\mathbb R^{X^\tau\times X^\tau}$ be the influence matrix given by
\[
J^\tau(x,a;y,a')
=
\begin{cases}
\mu^\tau(\sigma_y=a'\mid\sigma_x=a)
-\mu^\tau(\sigma_y=a') & x\neq y,\\
0 & x=y.
\end{cases}\]
\end{definition}
\begin{definition}[Spectral independence]
We say that the distribution $\mu$ is \emph{$\eta$-spectrally independent} if for every $U\subset V$ and every $\tau\in \Omega_U$, we have
\[
    \lambda_1(J^\tau)\leq\eta,
\]
where \(\lambda_1\) denotes the largest
eigenvalue.
More generally, given $\eta_0,\ldots,\eta_{n-2}\geq 0$, we say that
$\mu$ is \emph{$(\eta_0,\ldots,\eta_{n-2})$-spectrally independent} if
\[
\lambda_1(J^\tau)\leq \eta_{|U|}
\]
for every $U\subseteq V$ with $|U|\leq n-2$ and every feasible
$\tau\in\Omega_U$. For $c\geq0$ and $0\leq\eta<1$, we say that $\mu$ is
\emph{$(c,\eta)$-spectrally independent} if it is
$(\eta_0,\ldots,\eta_{n-2})$-spectrally independent with
\[
\eta_k=\min\{c,\eta(n-k-1)\}.\]

\end{definition}

Since its introduction in \cite{ALO}, spectral independence has found broad applications in the analysis of Markov chains for spin systems. This is largely because it can be shown to imply
the approximate tensorization of both variance and entropy.

\begin{theorem}[{\cite[Theorem 1.3]{ALO},~\cite[Theorem 1.1]{Jain}}]
\label{thm:si-to-avt}
Let $c\geq0$ and
$0\leq\eta<1$. Suppose $G=(V,E)$ has maximum degree $\Delta$ and that $\mu$ is $(c,\eta)$-spectrally independent.
Then, if $n \ge 50\lceil 2c\rceil\Delta$, $\mu$ satisfies 
$C$-approximate tensorization of variance with 
$$
    C=
    \frac{(25\Delta\lceil 2c\rceil)^{5\lceil 2c\rceil}}
         {\kappa(1-\eta)^{1+2c}},
$$
where $\kappa>0$ is a universal constant.
\end{theorem}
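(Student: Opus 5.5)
This statement is a cited result, \cite[Theorem 1.3]{ALO} combined with \cite[Theorem 1.1]{Jain}, so the plan is to assemble it from those works rather than prove it from scratch. The route goes through the simplicial-complex (down--up walk) formulation.

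\textbf{Step 1: Local spectral expansion.} View $\mu$ as a weighted pure simplicial complex whose faces are partial feasible configurations $\{(x,\sigma_x)\}_{x\in U}$. The local walk at a link $\tau\in\Omega_U$ has transition matrix, up to normalization by $n-|U|-1$, equal to $I+J^\tau$ restricted to $X^\tau$. Hence
\[
\lambda_2(\text{local walk at }\tau)\le \frac{\lambda_1(J^\tau)}{n-|U|-1}\le \frac{\eta_{|U|}}{n-|U|-1}.
\]
With $\eta_k=\min\{c,\eta(n-k-1)\}$ this gives local expansion at most $\min\{c/(n-k-1),\eta\}$ at level $k$.

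\textbf{Step 2: Local-to-global for the first $n-O(c)$ levels.} Apply the Alev--Lau / Kaufman--Oppenheim local-to-global theorem. For links of codimension at least $m$, the product
\[
\prod_k\Big(1-\frac{c}{n-k-1}\Big)
\]
is at least of order $(m/n)^{c}$. This yields a spectral gap of order $n^{-1}$ times a constant for the down--up walk that resamples $m\approx \lceil 2c\rceil$ coordinates. Equivalently, it gives approximate tensorization of variance for blocks of size roughly $2c$. The factor $(1-\eta)^{-1}$ comes from the last levels, where only the $\eta$ bound is useful. The hypothesis $n\ge 50\lceil 2c\rceil\Delta$ guarantees that the relevant range of levels is nonempty.

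\textbf{Step 3: Block factorization to single-site tensorization.} This is where \cite{Jain} enters and where bounded degree is used. The goal is to convert the $m$-uniform block factorization into single-site tensorization. Split each block variance $\mu[\Var_S(f)]$ for $|S|=m$ into single-site variances by bounding the spectral gap of the Glauber dynamics on $\mu^\tau$ restricted to $S$. A crude bound suffices here. Since the block has bounded size and the pinned marginals on $\le m$ sites are controlled via spectral independence and degree $\Delta$, the gap of Glauber on $S$ is at least $(25\Delta m)^{-5m}$ up to constants. This is the source of the exponential-in-$c$ factor in $C$.

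\textbf{Main obstacle.} The main difficulty is Step 3: doing this without any marginal lower bound $b$. \cite{Jain} handle it by exploiting the $(c,\eta)$ form at small codimension, which controls the conditional distributions on small blocks directly. I would reproduce their counting argument rather than re-derive it, and the final constant is tracked as stated with a universal $\kappa$.
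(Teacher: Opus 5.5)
\paragraph{Assessment.} The paper gives no proof here; it imports the result from \cite[Theorem 1.3]{ALO} and \cite[Theorem 1.1]{Jain}, so deferring to those works is fine. The reconstruction you sketch, however, contains a genuine error and does not yield an $n$-independent $C$.

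\paragraph{The error is in Step 2.} By your own estimate, running local-to-global down to links with $m$ free coordinates costs a factor $\prod_{j=m}^{n-1}(1-c/j)\approx (m/n)^{c}$. The $m$-uniform block factorization constant it produces is therefore of order $(n/m)^{c}$. For $m\approx\lceil 2c\rceil$ this is $n^{\Theta(c)}$, not a constant. The down--up walk on $O(c)$ coordinates does not get gap $\Omega(m/n)$ from this argument. Even with a perfect Step 3 you would only obtain $C=n^{O(c)}$, which is the polynomial bound ALO already gives directly for Glauber.

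Relatedly, your Step 3 bound $(25\Delta m)^{-5m}$ seems to have been read off from the target constant. For a block of bounded size the degree $\Delta$ plays no role. Also, the factor $(1-\eta)^{-(1+2c)}$ does not arise from the top levels of a global local-to-global bound.

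\paragraph{The missing idea.} You need blocks of linear size $\ell=\theta n$ with $\theta\asymp(\Delta\lceil 2c\rceil)^{-1}$; the hypothesis $n\ge 50\lceil 2c\rceil\Delta$ is what makes this meaningful. Down to that level, the product $\prod_{j\ge \ell}\bigl(1-\min\{c/j,\eta\}\bigr)$ is bounded below by a quantity depending only on $c$, $\Delta$ and $\eta$ (of order $\theta^{O(c)}$), not on $n$.

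Reducing a linear-size block to single sites is where bounded degree and the spin-system structure enter:
\begin{itemize}
\item A uniformly random $\theta n$-subset $S$ induces components in $G$ whose sizes have geometric tails. There are at most $(e\Delta)^{k-1}$ connected $k$-sets through a vertex, so the tail ratio is about $e\Delta\theta$.
\item By the spatial Markov property, $\mu^\tau_S$ is a product over these components, so variance tensorizes across them with constant $1$.
\item On each component $K$, the pinned measure is again $(c,\eta)$-spectrally independent, as a distribution on $|K|$ free sites. Run the local-to-global bound all the way down: use factors $1-c/j\ge 1/2$ for $j\ge\lceil 2c\rceil$, and factors $1-\eta$ for the last at most $\lceil 2c\rceil$ levels. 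This gives single-site tensorization on $K$ with constant $|K|^{O(c)}(1-\eta)^{-(1+2c)}$, with no marginal lower bound $b$.
\item Averaging $|K|^{O(c)}$ against the geometric tail of component sizes gives an $n$-independent constant of the stated shape.
\end{itemize}
Without this component decomposition, Step 3 cannot handle linear-size blocks; with bounded-size blocks, Step 2 fails.
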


\begin{theorem}[{\cite[Theorem~2.9]{chenliu2023optimal},~\cite[Theorem~1.6]{BCCPSV22}}]
\label{thm:si-to-at}
Let $\eta>0$. Suppose $G=(V,E)$ has maximum degree $\Delta$ and that $\mu$ is $\eta$-spectrally independent,
$b$-marginally bounded and totally connected.
Then $\mu$ satisfies $C$-approximate tensorization of entropy with 
$
   C =\big(\frac{\Delta}{b}\big)^{O(1+\eta/b)}.
$
\end{theorem}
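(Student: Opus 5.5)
The plan is to follow the local-to-global route of Chen, Liu and Vigoda: first go from spectral independence to a \emph{uniform block factorization} of entropy with linear-size blocks, and then use bounded degree together with marginal boundedness and total connectedness to reduce block factorization to single-site approximate tensorization. Throughout, $\theta\in(0,1)$ is a block-size parameter that will be fixed in terms of $\Delta$.

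\textbf{Step 1 (uniform block factorization).} Set $\ell=\lceil\theta n\rceil$ and let $S$ denote a uniformly random subset of $V$ of size $\ell$. I would show
\[
\Ent_\mu(f)\le C_1\,\mathbb E_S\,\mu[\Ent_S(f)],
\qquad C_1=\Big(\tfrac{e}{\theta}\Big)^{O(1+\eta/b)} .
\]
The argument runs through the down-up walk on the simplicial complex of partial pinnings.
\begin{itemize}
\item Spectral independence bounds $\lambda_1(J^\tau)\le\eta$ for every pinning $\tau$.
\item Combined with $b$-marginal boundedness, this upgrades to \emph{entropic} independence: a local entropy contraction for each link, with rate roughly $1-\frac{1+O(\eta/b)}{n-k}$ at level $k$.
\item Multiplying these per-level contractions from level $n$ down to level $n-\ell$ gives the displayed block factorization constant.
\end{itemize}
I expect this to be the main obstacle. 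Passing from a spectral bound to an entropy contraction is the delicate part, and it is exactly where the ratio $\eta/b$ enters the exponent. My first attempt would be the Chen–Liu–Vigoda entropic-independence lemma, which bounds a KL-divergence contraction at each link by a linearization around $\mu$ and controls the error via $b$.

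\textbf{Step 2 (splitting blocks into components).} Fix a block $S$ and a pinning outside it. Conditional on that pinning, $\mu_S$ is a product over the connected components of $G[S]$, because non-adjacent coordinates do not interact. Entropy tensorizes exactly over independent components, so $\mu[\Ent_S(f)]$ is at most the sum of the block entropies over the components of $G[S]$.

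\textbf{Step 3 (each component to single sites).} For a component $K$ with $|K|=k$, I would bound crudely
\[
\mu[\Ent_K(f)]\le C_K\sum_{v\in K}\mu[\Ent_v(f)],
\]
where $C_K$ depends only on $k$ and $b$, for instance $C_K\le k^2\,b^{-O(k)}$. This uses two inputs.
\begin{itemize}
\item By total connectedness, the conditional support on $K$ is connected under single-site moves.
\item By $b$-marginal boundedness, every feasible configuration on $K$ has conditional mass at least $b^{k}$.
\end{itemize}
A canonical-paths or log-Sobolev comparison for the heat-bath block chain on $K$ then gives the bound.

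\textbf{Step 4 (averaging over blocks).} Average over $S$ and count how often a site $v$ lies in a component of size $k$. Connected sets of size $k$ containing $v$ number at most $(e\Delta)^k$, and each lies inside $S$ with probability at most about $\theta^{k}$. So the expected weight placed on $\mu[\Ent_v(f)]$ is at most
\[
\frac{\ell}{n}\sum_k (e\Delta\theta)^{k}\,k^2 b^{-O(k)} .
\]
Choose $\theta=b^{O(1)}/(2e\Delta)$ so that this geometric series converges to $O(\theta)$. This gives $\mathbb E_S\,\mu[\Ent_S(f)]\le O(\theta)\sum_v\mu[\Ent_v(f)]$ up to constants.

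\textbf{Conclusion.} Combining with Step 1 gives approximate tensorization of entropy with
\[
C=C_1\cdot O(\theta)=\Big(\tfrac{\Delta}{b}\Big)^{O(1+\eta/b)} .
\]
The extra factors $\Delta/b$ coming from the choice of $\theta$ are absorbed into the exponent.
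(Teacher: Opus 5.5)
Your outline is correct and is essentially the argument of the cited sources. The paper gives no proof of its own here; it simply imports the statement from Chen--Liu--Vigoda and BCCPSV22, where spectral independence plus $b$-marginal boundedness gives $\lceil\theta n\rceil$-uniform block factorization via local entropy contraction and a local-to-global theorem, and then bounded degree, total connectedness and crude $b^{-O(k)}$ tensorization constants on the small components of $G[S]$, with $\theta=b^{O(1)}/\Delta$, convert this into single-site approximate tensorization.

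As you anticipate, Step~1 is where all the work lies, and your sketch defers it to their local entropy contraction lemma rather than proving it. The bookkeeping in Step~4 is slightly off: connected sets of size $k$ through $v$ number at most $(e\Delta)^{k-1}$, so the weight is $\theta\sum_k(e\Delta\theta)^{k-1}C_k$. This does not affect the final $(\Delta/b)^{O(1+\eta/b)}$ bound.
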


Theorems~\ref{thm:si-to-avt} and~\ref{thm:si-to-at},
when combined with our Theorems~\ref{thm:product:intro} and~\ref{thm:entropy},
set spectral
independence as a useful certificate for fast mixing of the
systematic scan dynamics. Recent works have established spectral independence with a variety of methods, including certain forms of correlation decay~\cite{ALO,ChenLiuVigodaContraction,chen2021rapid,feng2022rapid}, path coupling for local Markov chains~\cite{BCPSV,Liu}, and the stability of the partition function~\cite{chenliu2024stability}.
The list of bounded-degree spin systems for which spectral
independence is known is too extensive to catalog here.  We record instead four representative consequences of Theorem~\ref{thm:entropy}. Let $G=(V,E)$ be an $n$-vertex graph of maximum
degree $\Delta \ge 3$. Then, 

\begin{enumerate}
\item \emph{Hard-core model.}
The hard-core model with fugacity $\lambda>0$ is a distribution 
over the independent sets of a graph where each independent set $I$
 is assigned probability $\mu_{G,\lambda}(I)
    \propto \lambda^{|I|}$.
Let $\delta\in(0,1)$ and assume that $\lambda\leq(1-\delta)\lambda_c(\Delta)$ where $\lambda_c(\Delta)
    =\frac{(\Delta-1)^{\Delta-1}}{(\Delta-2)^\Delta}$ is the uniqueness threshold for $\Delta$-regular trees\footnote{In words, this means that on every infinite regular tree
of degree at most $\Delta$ there is a unique infinite-volume Gibbs
measure, or equivalently, distant boundary conditions have a vanishing
effect on the root marginal; see~\cite{weitzThesis} for a formal definition.}. 
Then, the systematic scan
dynamics for the hard-core model has mixing time
$O(\log n)$ \cite{ALO}.

\item \emph{Proper colorings.}
Let
$\mu_{G,q}$ be the uniform distribution over the proper $q$-colorings of the graph $G$.
When $q\geq(11/6-\varepsilon_0)\Delta$, where $\varepsilon_0 \approx 10^{-5}$,
the systematic scan dynamics for $\mu_{G,q}$ mixes in
$O(\log n)$ steps \cite{BCCPSV22}.

\item \emph{Antiferromagnetic two-spin systems.}
For $\sigma\in\{0,1\}^V$, let $m_s(\sigma)$ be the number of edges whose
endpoints both have spin $s\in\{0,1\}$, and let $n_1(\sigma)$ be the
number of vertices of spin $1$.  Given real numbers $\beta,\gamma,\lambda > 0$ such that $\beta\gamma<1$, an antiferromagnetic two-spin system is defined by the Gibbs distribution
$
    \mu_{G,\beta,\gamma,\lambda}(\sigma)
    \propto
    \beta^{m_1(\sigma)}\gamma^{m_0(\sigma)}
    \lambda^{n_1(\sigma)}.
$
If the system is in the tree uniqueness region
with a fixed gap\footnote{The fixed gap keeps the parameters
bounded away from the uniqueness threshold.}
$\delta>0$, 
then the systematic scan dynamics for
$\mu_{G,\beta,\gamma,\lambda}$ has mixing time $O(\log n)$
\cite{ChenLiuVigodaContraction}.

\item \emph{Ferromagnetic Potts model.}
For $q\geq2$ and $\sigma\in[q]^V$, 
the ferromagnetic $q$-state Potts model at inverse
temperature $\beta\geq0$ is given by the distribution
$
    \mu_{G,q,\beta}(\sigma) \propto e^{\beta M_G(\sigma)},$
    where 
$M_G(\sigma)$ denotes the number of monochromatic edges in $\sigma$. If
$
    \beta
    <\max\!\left\{
        \frac{2}{\Delta},
        \frac{1}{\Delta}\log\frac{q-1}{\Delta}
    \right\},
$
then the systematic scan dynamics for $\mu_{G,q,\beta}$ has mixing time $O(\log n)$ \cite{BCCPSV22}.
\end{enumerate}
It is worth noting that the parameter regimes in all four applications above match those in which Glauber dynamics is also known to mix optimally.
Beyond the bounded-degree setting, our
variance tensorization theorem (Theorem \ref{thm:product:intro}) has a quadratic dependence on the degree.
Therefore, for models like general antiferromagnetic two-spin
systems in the tree uniqueness region
with a fixed gap
$\delta>0$ that are known to be $\eta$-spectrally independent
with $\eta$ independent of $\Delta$ or $n$ (see, e.g.,~\cite[Theorem~2.10]{chenfengyinzhang2024all}),
Theorem \ref{thm:product:intro} applies and provides a $O(\Delta^2)$ bound for the relaxation time. 

\subsubsection{Strong spatial mixing on lattices}
\label{sec:ssm-applications}

There is a long line of work relating spatial mixing properties of
Gibbs distributions to functional inequalities, including approximate
tensorization of entropy and variance; see, e.g.,
\cite{StroockZegarlinski,MartinelliOlivieri,MartinelliOlivieriSchonmann,martinelli2004lectures,Cesi,CaputoParisi}.
Strong spatial mixing (SSM) is a standard formulation of
decay of correlations that, roughly speaking, asserts that the effect on the distribution of changing a spin in a distant region decays exponentially with the distance to where the change happens.
More precisely, 
consider the Gibbs distribution $\mu$ on a finite box
$\Lambda\subset\mathbb Z^d$. Let $B\subseteq\Lambda$, and let
$\mu_{\Lambda,B}^{\tau}$ denote the distribution on $B$ under 
the pinning $\tau$ on the exterior of $\Lambda$.  We say that $\mu$ satisfies SSM with constants
$A,c>0$ if whenever pinnings $\tau$ and $\tau'$ on the exterior of $\Lambda$ differ only at a boundary site $u$ of $\Lambda$, we have
\[
  \left\|
    \mu_{\Lambda,B}^{\tau}
    -\mu_{\Lambda,B}^{\tau'}
  \right\|_{\textsc{tv}}
  \leq A e^{-c\operatorname{dist}(u,B)},
\]
uniformly over the boxes $\Lambda$, $B$, $u$, and the pinnings.
It has been established that SSM implies approximate tensorization of variance and entropy on $\Z^d$~\cite{StroockZegarlinski,MartinelliOlivieri,MartinelliOlivieriSchonmann,Cesi,CaputoParisi}.

\begin{theorem}
\label{thm:ssm-to-at}
Suppose $\mu$ satisfies strong spatial mixing with constants
$A,c>0$. Then there exist finite constants
$C_{\textsc{V}}$ and $C_{\textsc{E}}$ 
such that $\mu_\Lambda^\tau$ satisfies $C_{\textsc{V}}$-approximate tensorization of variance and $C_{\textsc{E}}$-approximate tensorization of entropy for every finite box $\Lambda$ and pinning $\tau$.
The constants $C_{\textsc{V}}$ and $C_{\textsc{E}}$ depend on $d$, $A$, $c$, 
and the spin system specification but are independent of the dimensions of $\Lambda$ and the pinning $\tau$. 
\end{theorem}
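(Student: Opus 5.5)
The plan is to follow the classical route of Martinelli--Olivieri and Cesi, in three stages. First, SSM gives an approximate factorization over two overlapping regions. Second, a recursive geometric argument turns this into factorization over small blocks with a constant independent of $|\Lambda|$. Third, a finite-volume argument on each block reduces block factorization to single-site factorization. We treat entropy; variance follows by the same argument, or by linearizing entropy via $f=1+\varepsilon g$ with $\varepsilon\to0$.

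\emph{Step 1 (two-block factorization from SSM).} Take a region $\Lambda$ with pinning $\tau$, and write $\Lambda=A\cup B$ with $A\cap B$ a slab of width $\ell$. We aim to show
\[
\Ent_{\mu_\Lambda^\tau}(f)\le (1+\varepsilon_\ell)\,\mu_\Lambda^\tau\big[\Ent_A(f)+\Ent_B(f)\big],\qquad \varepsilon_\ell\le K e^{-c'\ell}.
\]
Following Cesi, we use the identity $\Ent(f)=\mu[\Ent_A f]+\Ent(\mu_A f)$. The second term is an entropy of a function of $\sigma_{\Lambda\setminus A}\subseteq B\setminus A$ only. The key estimate is that the conditional density of $\sigma_{\Lambda\setminus B}$ given $\sigma_{\Lambda\setminus A}$ under $\mu_\Lambda^\tau$ is within a multiplicative factor $1\pm Ae^{-c\ell}$ of its value under a fixed reference boundary. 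SSM supplies this once we pass from total variation to ratios; that passage uses finite range, bounded spins, and the standard "SSM for boxes implies ratio SSM" lemma on $\Z^d$. With this near-independence, the "$\Ent$ of a product" argument (as in Lemma 2.2 of Cesi) gives $\Ent(\mu_A f)\le(1+\varepsilon_\ell)\mu[\Ent_B(f)]$ up to the stated error.

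\emph{Step 2 (recursion over scales).} Let $\gamma(L)$ be the best factorization constant, over boxes of side at most $L$ and all pinnings, for factorization into unit blocks of side $\ell_0$. We split a box of side $L$ along its longest side into two halves overlapping in width $\ell=L^{1/3}$, apply Step 1, and then apply the induction hypothesis to each piece. Overlap double counting is handled by averaging over $\asymp L^{1/3}$ translates of the cut. This gives
\[
\gamma(3L/2)\le \gamma(L)(1+\varepsilon_{L^{1/3}})(1+O(L^{-1/3})).
\]
Since this product converges, $\gamma$ is bounded uniformly in $L$. Rectangles with bounded aspect ratio are treated the same way, and the boundary pinning is arbitrary throughout. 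We expect this step to be the main obstacle. The difficulty is not Step 1 itself but the bookkeeping that keeps overlap terms from multiplying the constant by $2$ at each scale, which is exactly what the averaging over translated cuts must fix.

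\emph{Step 3 (blocks to sites).} Each block has at most $\ell_0^d$ sites and finitely many spins. Its conditional measure is therefore drawn from a finite family of distributions, one for each boundary condition. For each such distribution, the single-site heat-bath chain on the block is ergodic, because SSM and the specification give connected conditional supports for the models in question. Hence each satisfies entropy factorization $\Ent_{\text{block}}\le C_0\sum_{i\in\text{block}}\mu[\Ent_i]$ with $C_0$ depending only on $\ell_0$, $d$ and the specification; this holds because a finite ergodic reversible chain has a positive log-Sobolev constant. Combining with Step 2 gives $C_{\textsc{E}}=\sup_L\gamma(L)\cdot C_0$, and similarly $C_{\textsc{V}}$. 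Both are independent of $\Lambda$ and $\tau$.
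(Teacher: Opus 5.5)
The paper gives no proof of its own here: it quotes the result from Stroock--Zegarlinski, Martinelli--Olivieri(--Schonmann), Cesi and Caputo--Parisi, and your outline (Cesi's quasi-factorization driven by a ratio form of SSM, a recursion over rectangles with averaging over disjoint overlap positions, a finite-volume base case, and variance by linearizing entropy) is exactly the Cesi/Martinelli--Olivieri argument carried out there, so it is the same approach. Two details need correcting, neither of which breaks the recursion: across a slab whose cross-section has order $L^{d-1}$ sites, the ratio-mixing error is in general only $O(L^{d-1}e^{-c\ell})$, not $Ke^{-c'\ell}$ uniformly in $L$ (this is still summable over scales with $\ell=L^{1/3}$, but you must start the recursion at a scale large enough that the quasi-factorization's smallness condition on $\varepsilon$ holds), and ergodicity of the block chains in Step 3 comes from the strict positivity of the finite-range specification assumed in those works, not from SSM, which by itself implies no connectivity of conditional supports.
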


In two dimensions, SSM is known to hold for the ferromagnetic Potts
model throughout the subcritical regime
$\beta<\beta_c(q)=\log(1+\sqrt q)$
for every integer $q\geq2$
\cite{BeffaraDuminilCopin,Alexander,MartinelliOlivieriSchonmann}.
The preceding theorem, combined with Theorem~\ref{thm:entropy}, therefore gives the following immediate
corollary.

\begin{corollary}
\label{cor:potts-mixing}
Fix $q\geq2$ integer and let $\beta<\beta_c(q)=\log(1+\sqrt q)$.
For every square
box $\Lambda\subset\mathbb Z^2$ with $n=|\Lambda|$, every pinning $\tau$ on the exterior of $\Lambda$, and every scan order $\pi$, the mixing time of the systematic scan
dynamics for $\mu_{\Lambda,\beta}^{\tau}$ satisfies
$\tmix(P_\pi) = O(\log n)$.
\end{corollary}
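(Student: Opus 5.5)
The corollary follows by chaining Theorem~\ref{thm:ssm-to-at} into Theorem~\ref{thm:entropy}. All that is needed is to check that the hypotheses of Theorem~\ref{thm:entropy} hold for $\mu_{\Lambda,\beta}^\tau$ with constants independent of $n$, $\Lambda$, $\tau$ and $\pi$.

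\textbf{Step 1: strong spatial mixing.} For $q\geq 2$ and $\beta<\beta_c(q)$, the ferromagnetic Potts model on $\mathbb Z^2$ satisfies strong spatial mixing on square boxes with constants $A,c>0$ depending only on $q,\beta$. This is the cited input \cite{BeffaraDuminilCopin,Alexander,MartinelliOlivieriSchonmann}: exponential decay of connectivities in the subcritical regime, plus the Martinelli--Olivieri--Schonmann transfer to SSM on squares in two dimensions. We take this as given.

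\textbf{Step 2: entropy tensorization.} Theorem~\ref{thm:ssm-to-at} then gives $C_{\textsc E}$-approximate tensorization of entropy for $\mu_{\Lambda,\beta}^\tau$. Here $C_{\textsc E}$ depends only on $d=2$, $A$, $c$, $q$ and $\beta$, uniformly over $\Lambda$ and $\tau$.

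\textbf{Step 3: the structural hypotheses.} These are direct checks.
\begin{itemize}
\item \emph{Commutation graph.} The nearest-neighbour graph on $\Lambda$ is a commutation graph: the heat-bath kernel at $v$ depends only on the spins of the neighbours of $v$ (including pinned exterior ones), so nonadjacent updates commute. Its maximum degree is $\Delta=4$.
\item \emph{Marginal boundedness.} Every spin is feasible under every pinning, since the model has soft constraints. Given any conditioning, the conditional law at $v$ is a mixture over neighbour configurations of laws proportional to $e^{\beta m_a}$ with $m_a\in\{0,\dots,4\}$. Each such law gives every spin probability at least $b:=1/(1+(q-1)e^{4\beta})$, hence so does the mixture.
\item \emph{Total connectedness.} Since $\mu_{\Lambda,\beta}^\tau$ has full support $[q]^\Lambda$, every conditional support $\Omega_S^\tau$ is a full product $[q]^S$. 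A full product is connected under single-coordinate changes.
\end{itemize}

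\textbf{Step 4: conclusion.} Applying Theorem~\ref{thm:entropy} gives
\[
\tmix(P_\pi)\le 2(1+C_{\textsc E}A')\big[\log n+\log(8\log(1/b))\big],
\]
with $A'=4\,(6\log(1/b)/b^6)^4$. The prefactor is a constant depending only on $q$ and $\beta$, so $\tmix(P_\pi)=O(\log n)$ uniformly in $\tau$ and in the scan order $\pi$.

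The only nontrivial ingredient is Step~1, which is imported from the literature and does not need to be reproved here. Everything else is bookkeeping; the one point to watch is that all constants are uniform in the pinning $\tau$.
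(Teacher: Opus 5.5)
Your proposal is correct and follows the same route as the paper: strong spatial mixing for the subcritical planar Potts model, then Theorem~\ref{thm:ssm-to-at} for uniform entropy tensorization, then Theorem~\ref{thm:entropy}. The paper calls the corollary immediate and does not write out the structural hypotheses. Your explicit checks supply them correctly: the nearest-neighbour commutation graph with $\Delta=4$, marginal boundedness with $b=1/(1+(q-1)e^{4\beta})$ uniform in $\tau$, and total connectedness from full support.
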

The dependence on $n$ in the upper bound in this corollary is sharp already
for $q=2$. Indeed, the $\Omega(\log n)$ lower bound in
\cite[Lemma~4.1]{blanca2026rapid} for the mixing time of the multiplicative
reversibilization of the scan order that updates all even vertices before
all odd vertices transfers directly to the corresponding nonreversible
systematic scan dynamics.

\subsection{Applications to edge-variable models}
\label{subsec:holant}

To illustrate the scope of our results beyond spin systems, we apply them to Holant problems.
These are graphical models defined over subsets of the edges
of a graph $H=(W,F)$.
In the binary symmetric case, each vertex $v$ is assigned a constraint function
$f_v:\mathbb N\to\mathbb R_{\ge 0}$ and each edge $e$ has a weight
$\lambda_e>0$. A subset $S \subseteq F$ is assigned probability 
$$
  \mu(S)
   \propto \prod\nolimits_{v\in W}f_v(|S\cap F_v|)
             \prod\nolimits_{e\in S}\lambda_e,
$$
where $F_v$ denotes the set of edges incident to $v$.
Classical examples of Holant problems 
include the monomer-dimer model, $b$-matchings, $b$-edge covers, and weighted even subgraphs. Taking $F$ as the coordinate set, the commutation graph for this class of distributions corresponds to the line graph $L(H)$; its maximum degree is at most $\Delta= 2(d-1)$ where $d$ is the maximum degree of $H$.

A fixed-parameter, totally connected, $b$-marginally bounded, and
$\eta$-spectrally independent Holant distribution with bounded
dependency degree is known to satisfy approximate tensorization of entropy with a
constant independent of $|F|$; this follows 
from a direct extension of~\cite[Theorem~2.9]{chenliu2023optimal} and has already been observed in~\cite{chenliu2024stability,chengu2024holant}.
Spectral independence in turn has been obtained for Holant models with log-concave constraint functions \cite[Theorem~13]{chengu2024holant}, and through zero-freeness and stability of the corresponding partition function
\cite[Theorems~1.7 and~3.2]{chenliu2024stability}.

Consequently, Theorem~\ref{thm:entropy} gives the following (non-exhaustive) applications to
fixed-parameter Holant families on bounded-degree graphs:
\begin{enumerate}
  \item \emph{Monomer--dimer model.}
  This is the Holant model with
  $f_v(k)=\mathbf 1\{k\leq1\}$.  Spectral independence is established
  in \cite[Theorem~2.10]{chenliu2023optimal}; hence, for every
  deterministic edge order, the systematic scan dynamics mixes in
  $O(\log |F|)$ time.

  \item \emph{Log-concave Holant models.}
  A Holant problem is said to be log-concave if
  for every $v \in W$ the constraint function $f_v$ satisfies 
  $f_v(k)^2\geq f_v(k-1)f_v(k+1)$, $f_v(0)>0$, and has no internal zeros (i.e.,  if $f_v(k_1) > 0$ and $f_v(k_2) > 0$ for some $k_1 < k_2$, $f_v(k) > 0$ for all
  integer $k \in \{k_1,\dots,k_2\}$). Log-concave models include weighted $b$-matchings and
  antiferromagnetic two-spin edge models.  
  Spectral independence is established in \cite[Theorem~13]{chengu2024holant} for these models, 
  and it thus follows that the mixing time of the systematic scan dynamics is $O(\log |F|)$.

  \item \emph{Subgraph world model.}
  This is the distribution where each subset $S \subseteq F$ is assigned probability $
    \mu(S)\propto\rho^{|\operatorname{odd}(S)|}\lambda^{|S|}$
       where $\rho\in(0,1]$ and $\lambda > 0$ are the model parameters.  The required spectral independence follows from the zero-freeness established in
  \cite{chenliu2024stability}.  Thus, the systematic scan dynamics for these models mixes in $O(\log |F|)$ time.
\end{enumerate}

\bigskip\noindent\textbf{AI Disclosure.} The authors used OpenAI Codex with GPT-5.6 Pro as a substantive,
human-directed research and writing aid during the development of this work.
The system was supplied with the authors' research notes 
with proposed approaches, 
and was used to explore proof strategies, refine candidate arguments, identify gaps and
counterexamples, and assist with drafting and revising the proofs. 
This assistance led to the main idea behind the variance contraction proof in Section~\ref{sec:var}, and the
entropy contraction proof in Section~\ref{sec:ent}. The authors independently verified all mathematical claims, calculations, and references in the paper and take full responsibility for its
accuracy, originality, and integrity.

\section{Preliminaries}
For the transition matrix of the systematic scan dynamics,
\begin{equation}\label{eq:physical}
 P_\pi=P_1P_2\cdots P_n,
\end{equation}
the site updates occur in the order $1,\ldots,n$, whereas when viewed as an operator, $P_\pi$ acts on $f$ from right to left in the order $n,\ldots,1$. For $i\leq j$, let
\begin{equation}\label{eq:consecutive-products}
 P_{i:j}:=P_iP_{i+1}\cdots P_j.
\end{equation}
For convenience, we set 
 $P_{i:j}:=I$ when $i>j$. Thus, $P_{1:n}=P_\pi$, $P_{i:n}=P_iP_{i+1:n}$, and $P_{n+1:n}f=f$.
Each $P_i$ is reversible with respect to $\mu$ and hence preserves $\mu$.
Consequently, the generally nonreversible product $P_\pi$ has stationary distribution $\mu$. The
assumed global connectedness of $\Omega$ 
under heat-bath moves makes $P_\pi$ irreducible, and since
$P_\pi(\sigma,\sigma)>0$ for every $\sigma\in\Omega$, we have that $P_\pi$ is also aperiodic. Hence, the systematic scan dynamics converges to $\mu$ from every initial state.

\subsection{Spectral gap}
Let $P$ be the transition matrix of an ergodic Markov chain with finite state space $\Omega$ and stationary distribution $\mu$.
We endow $\mathbb R^{|\Omega|}$ with the inner product
\[
 \langle f,g\rangle_\mu
 :=\sum_{x\in\Omega}f(x)g(x)\mu(x),
\]
and let $\langle f,f\rangle_\mu = \|f\|_{2,\mu}^2$. This gives the Hilbert space
$
 L^2(\mu)
 :=\bigl(\mathbb R^{|\Omega|},\langle\cdot,\cdot\rangle_\mu\bigr),
$
and $P$ defines a linear operator from $L^2(\mu)$ to itself.
Specifically, $P$ acts on 
functions $f:\Omega\to\mathbb R$ by
\begin{equation*}
 Pf(\sigma)
 :=\sum_{\tau\in\Omega}P(\sigma,\tau)f(\tau).
\end{equation*}
The adjoint of $P$ in $L^2(\mu)$ is the transition
matrix $P^*(x,y)
 =\frac{\mu(y)P(y,x)}{\mu(x)}$.
Consequently, $P$ is self-adjoint if and only if it is reversible with
respect to $\mu$.

When $P$ is reversible, we write its eigenvalues as
\[
 1=\lambda_1>\lambda_2\geq\cdots
 \geq\lambda_{|\Omega|}\geq-1.
\]
The strict inequality $\lambda_1>\lambda_2$ follows from irreducibility.
The \emph{absolute spectral gap} of $P$ is
\begin{equation}\label{eq:absolute-gap}
 \lambda(P):=1-\lambda_*,
 \qquad
 \lambda_*:=\max\bigl\{|\lambda_2|,
                         |\lambda_{|\Omega|}|\bigr\}.
\end{equation}
If $P$ is ergodic, meaning irreducible and aperiodic, then
$\lambda(P)>0$. The relaxation time is given by $\lambda(P)^{-1}$.

The operator $P$ is \emph{positive semidefinite} if $P=P^*$ and for every $f\in\mathbb R^{|\Omega|}$ we have $\langle f,Pf\rangle_\mu\geq0$.
In this case, all the eigenvalues of $P$ are nonnegative.
If $P$ is positive semidefinite, then its absolute spectral gap satisfies
\begin{equation}\label{eq:gap-variational}
 \lambda(P)
 =1-\lambda_2
 =\min_{\substack{f\in\mathbb R^{|\Omega|}\\
                  \Var_\mu(f)\neq0}}
   \frac{\mathcal E_P(f,f)}{\Var_\mu(f)},
\end{equation}
where,
for functions $f,h:\Omega\to\mathbb R$,
$\mathcal E_P(f,g) = \langle f,(I-P)g\rangle_\mu$ denotes 
the Dirichlet form associated to the pair $(P,\mu)$.

\subsection{Contraction of variance, entropy, and mixing times}

We say that $P$ contracts variance at rate $\delta \in (0,1)$ when
$$
\Var_\mu(P f) \le (1-\delta)\Var_\mu(f).
$$
It is a standard fact that such a contraction implies that the spectral gap of the additive and multiplicative reversibilizations of $P$ satisfy
\begin{align*}
 \lambda(\frac{P+P^*}{2})&\geq1-\sqrt{1-\delta},
 \\
 \lambda(P^*P)&\geq \delta; 
\end{align*}
see, e.g.,\cite[Proposition~2.3 and Remark~2.10]{Fill91}.
Similarly, $P$ contracts entropy at rate $\delta \in (0,1)$ when
\begin{equation}
\Ent_\mu(P f) \le (1-\delta)\Ent_\mu(f).
\end{equation}
If this holds for $P^*$ for every nonnegative function $f$, then
\begin{equation}
\label{eq:mixing}
\tmix(P)\le 1+\delta^{-1}[\log 8 + \log \log(1/\mu_*)],
\end{equation}
where $\mu_* = \min_{\sigma \in \Omega} \mu(\sigma)$; see~\cite[Lemma 2.4]{BCPSV}.

\section{Contraction of variance under the systematic scan dynamics}
\label{sec:var}

Let $Q_i:=I-P_i$. The following lemma collects the elementary properties about the operators $Q_i$ and $P_i$ used in our proof.

\begin{lemma}\label{lem:projection}
The following properties hold for each coordinate $i$:
\begin{enumerate}
    \item\label{lem:projection:0} $P_i^2=P_i=P_i^*$, $Q_i^2=Q_i=Q_i^*$, and $P_iQ_i=Q_iP_i=0$.
    \item $P_i$ and $Q_i$ are positive semidefinite. 
    \item\label{lem:projection:preserve} The operator $P_i$ preserves constants, nonnegativity, and the
    mean. 
    \item If $P_i$ and $P_j$ commute, then $Q_iP_j=P_jQ_i$, $P_iQ_j=Q_jP_i$,
        and $Q_iQ_j=Q_jQ_i$.
    \item\label{lem:projection:contract} $\|P_i f\|_{2,\mu} \le \|f\|_{2,\mu}$ and $\|Q_if\|_{2,\mu}\leq\|f\|_{2,\mu}$ for every function $f: \Omega \rightarrow \R$.
\end{enumerate}
\end{lemma}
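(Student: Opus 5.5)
The plan is to write $P_i$ as an explicit conditional expectation and derive each item from that representation. For $f:\Omega\to\R$ we have
\[
 P_if(\sigma)=\sum_{a}\mu(\sigma_i=a\mid\sigma_{V\setminus\{i\}})\,f(\sigma^{i\to a}),
\]
so $P_if=\mathbb E_\mu[f\mid\sigma_{V\setminus\{i\}}]$. In words, $P_i$ is the conditional expectation onto the $\sigma$-algebra $\mathcal F_i$ generated by the coordinates in $V\setminus\{i\}$.

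\textbf{Item 1.} Since $P_if$ is $\mathcal F_i$-measurable, the tower property gives $P_i^2=P_i$. Self-adjointness follows from
\[
 \langle f,P_ig\rangle_\mu=\mu\big[\mathbb E[f\mid\mathcal F_i]\,\mathbb E[g\mid\mathcal F_i]\big],
\]
which is symmetric in $f$ and $g$; equivalently, the heat-bath chain is reversible. For $Q_i=I-P_i$, expanding gives
\[
 Q_i^2=I-2P_i+P_i^2=Q_i,\qquad P_iQ_i=P_i-P_i^2=0=Q_iP_i,
\]
and $Q_i^*=Q_i$ is immediate.

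\textbf{Item 2.} Positive semidefiniteness of $P_i$ follows from idempotence and self-adjointness:
\[
 \langle f,P_if\rangle_\mu=\langle P_if,P_if\rangle_\mu=\|P_if\|_{2,\mu}^2\ge0.
\]
The same computation works for $Q_i$.

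\textbf{Item 3.} Each row of $P_i$ is a probability vector. Hence $P_i$ maps constants to constants and nonnegative functions to nonnegative functions. For the mean, $\mu[P_if]=\langle 1,P_if\rangle_\mu=\langle P_i1,f\rangle_\mu=\mu[f]$.

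\textbf{Item 4.} Assume $P_iP_j=P_jP_i$. Expanding,
\[
 Q_iP_j=P_j-P_iP_j=P_j-P_jP_i=P_jQ_i.
\]
The identity $P_iQ_j=Q_jP_i$ is the same computation with $i$ and $j$ swapped. Finally,
\[
 Q_iQ_j=I-P_i-P_j+P_iP_j,
\]
which is symmetric in $i$ and $j$ under commutation, so $Q_iQ_j=Q_jQ_i$.

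\textbf{Item 5.} Because $P_iQ_i=0$ and $P_i$ is self-adjoint, $\langle P_if,Q_if\rangle_\mu=\langle f,P_iQ_if\rangle_\mu=0$. The decomposition $f=P_if+Q_if$ is therefore orthogonal, and Pythagoras gives
\[
 \|f\|_{2,\mu}^2=\|P_if\|_{2,\mu}^2+\|Q_if\|_{2,\mu}^2.
\]
Both contraction bounds follow.

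None of these steps is a real obstacle. The only point needing care is self-adjointness in item 1, which rests on the detailed-balance identity $\mu(\sigma)P_i(\sigma,\tau)=\mu(\tau)P_i(\tau,\sigma)$. This holds because for $\sigma,\tau$ agreeing off $i$, both sides equal $\mu(\sigma)\mu(\tau)/\mu(\sigma_{V\setminus\{i\}})$. Everything else is algebra from $P_i^2=P_i=P_i^*$.
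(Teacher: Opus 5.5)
Your proof is correct and matches the argument the paper relies on: the paper states this lemma without proof as a list of elementary facts, and later (in the proof of Lemma~\ref{lem:commuting-entropy}) invokes the same identification $P_if=\mu[f\mid\mathcal F_i]$ that your argument is built on. As you show, items 2--5 then follow by pure algebra from $P_i^2=P_i=P_i^*$ and the fact that each row of $P_i$ is a probability vector.
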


We are now ready to establish that the systematic scan dynamics contracts the variance at a constant rate whenever $\mu$ satisfies $C$-approximate tensorization of variance.

\begin{theorem}
\label{thm:product}
Suppose that $\mu$ satisfies $C$-approximate tensorization of variance and admits a commutation graph of maximum degree at most $\Delta$.
For every scan order $\pi$, and every real function $f$,
\begin{equation*}
 \Var_\mu(P_\pi f)
 \leq  \Big(1 - \frac{1}{1+C\Delta^2}\Big)\Var_\mu(f).
\end{equation*}
\end{theorem}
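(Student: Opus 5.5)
Let $g_i := P_{i:n}f$, so that $g_1=P_\pi f$, $g_{n+1}=f$ and $g_i=P_ig_{i+1}$. The proof rests on the law of total variance, $\mu[\Var_i(h)]=\|Q_ih\|_{2,\mu}^2=\Var_\mu(h)-\Var_\mu(P_ih)$. It gives the telescoping identity
\[
\sum_{i=1}^n \mu[\Var_i(g_{i+1})]=\Var_\mu(f)-\Var_\mu(P_\pi f).
\]
Combined with $C$-approximate tensorization applied to $P_\pi f$, it therefore suffices to prove
\begin{equation*}
\sum_{i=1}^n \|Q_i P_\pi f\|_{2,\mu}^2 \le \Delta^2 \sum_{j=1}^n \|Q_j g_{j+1}\|_{2,\mu}^2 .
\end{equation*}
Indeed, setting $V_0=\Var_\mu(f)$ and $V_1=\Var_\mu(P_\pi f)$, this yields $V_1\le C\Delta^2(V_0-V_1)$, i.e.\ $V_1\le (1-\frac{1}{1+C\Delta^2})V_0$.

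Fix $i$ and peel off $P_1,P_2,\dots$ from $Q_iP_{1:n}f$. Define $h_k:=Q_iP_{k:n}f=Q_i g_k$. For $k<i$ there are two cases.

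If $k\not\sim i$, then $Q_i$ commutes with $P_k$ by Lemma~\ref{lem:projection}, so $h_k=P_kQ_ig_{k+1}=P_kh_{k+1}$. Contractivity of $P_k$ then gives $\|h_k\|\le\|h_{k+1}\|$.

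If $k\sim i$, write $g_k=P_kg_{k+1}=g_{k+1}-Q_kg_{k+1}$. Then $h_k=h_{k+1}-Q_iQ_kg_{k+1}$, and since $\|Q_i\|\le 1$,
\[
\|h_k\|\le\|h_{k+1}\|+\|Q_kg_{k+1}\|.
\]
At $k=i$ we have $h_i=Q_iP_ig_{i+1}=0$. Chaining these inequalities from $k=1$ up to $k=i$ gives
\[
\|Q_iP_\pi f\|_{2,\mu}\le\sum_{j<i,\,j\sim i}\|Q_jg_{j+1}\|_{2,\mu}.
\]
This is the point where commutation is essential: without it, the non-neighbor steps would not be free.

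Now square. The sum on the right has at most $\Delta$ terms, so Cauchy--Schwarz gives
\[
\|Q_iP_\pi f\|^2\le\Delta\sum_{j<i,\,j\sim i}\|Q_jg_{j+1}\|^2 .
\]
Summing over $i$, each $j$ appears for at most $\Delta$ indices $i\sim j$. This yields the factor $\Delta^2$ and the displayed inequality, completing the proof.

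The main obstacle is the neighbor step. One must check that it loses only an additive term $\|Q_kg_{k+1}\|$, which is exactly the variance removed at step $k$ and hence telescopes, rather than something multiplicative that would compound along the scan. Writing $P_k=I-Q_k$ and using $\|Q_i\|\le1$ (Lemma~\ref{lem:projection}, item~\ref{lem:projection:contract}) handles this. The remaining bookkeeping is linear and uses only the triangle inequality before squaring.
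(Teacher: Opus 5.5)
Your proposal is correct and follows the paper's proof essentially step for step. It uses the same peeling of $Q_iP_{1:n}f$: commutation plus contractivity of $P_k$ for non-neighbors, $P_k=I-Q_k$ with the triangle inequality for neighbors, and $Q_iP_i=0$ at the end, followed by Cauchy--Schwarz, a second factor $\Delta$ from double counting, and the telescoping identity. The only cosmetic difference is that you telescope variances via the law of total variance for general $f$, whereas the paper first reduces to mean-zero $f$ and telescopes $\|\cdot\|_{2,\mu}^2$ directly.
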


\begin{proof}
Let
\begin{equation*}
 \mathcal E_i(f,f)
 :=\langle f,Q_if\rangle_\mu
 =\langle f,f\rangle_\mu-\langle f,P_if\rangle_\mu
\end{equation*}
denote the Dirichlet form for the update $P_i$. It is a standard fact
that $\mathcal E_i(f,f) =\mu[\Var_i(f)]$, so
$C$-approximate tensorization of variance is equivalent to
\begin{equation}
\label{eq:avt:alternative}
 \langle f-\mu[f],f-\mu[f]\rangle_\mu = \Var_\mu(f)
 \leq C\sum_{i=1}^n\mathcal E_i(f,f).
\end{equation}
Fix $\pi$ and let us assume first that $f$ has mean zero. 
By Lemma~\ref{lem:projection}(\ref{lem:projection:preserve}),
each $P_i$ preserves the mean, so $\mu[P_\pi f]=0$, and from~\eqref{eq:avt:alternative} we get
\begin{equation}\label{eq:variance-step-one}
 \langle P_\pi f,P_\pi f\rangle_\mu
 \leq C\sum_{i=1}^n\mathcal E_i(P_\pi f,P_\pi f).
\end{equation}
We bound each Dirichlet form $\mathcal E_i(P_\pi f,P_\pi f)$ next.
Suppose first that $i \not\sim j$.
Since by Lemma~\ref{lem:projection}, we have $\mathcal E_i(f,f)=\|Q_if\|_{2,\mu}^2$
and $Q_i P_j = P_j Q_i$, and also $P_{j:n}f = P_j(P_{j+1:n}f)$, we deduce that
\begin{align}\label{eq:variance-nonneighbor}
 \sqrt{\mathcal E_i(P_{j:n}f,P_{j:n}f)}  &=\|Q_iP_{j:n}f\|_{2,\mu}\notag\\
 &=\|P_jQ_iP_{j+1:n}f\|_{2,\mu}\notag\\
 &\leq\|Q_iP_{j+1:n}f\|_{2,\mu}
  =\sqrt{\mathcal E_i(P_{j+1:n}f,P_{j+1:n}f)},
 \end{align}
 where the inequality follows from Lemma~\ref{lem:projection}(\ref{lem:projection:contract}).
On the other hand, if $j\sim i$, from the triangle inequality and the contraction of $Q_i$ we obtain
\begin{align}
 \sqrt{\mathcal E_i(P_{j:n}f,P_{j:n}f)}
 &=\|Q_iP_{j:n}f\|_{2,\mu}\notag\\
 &=\|Q_iP_{j+1:n}f-Q_iQ_jP_{j+1:n}f\|_{2,\mu}\notag\\
 &\leq\|Q_iP_{j+1:n}f\|_{2,\mu}
       +\|Q_iQ_jP_{j+1:n}f\|_{2,\mu}\notag\\
 &\leq\sqrt{\mathcal E_i(P_{j+1:n}f,P_{j+1:n}f)}
       +\sqrt{\mathcal E_j(P_{j+1:n}f,P_{j+1:n}f)}.
 \label{eq:variance-neighbor}
\end{align}
This says that a neighboring update increases the square root of the
$i$-th Dirichlet form by at most the square root of its own Dirichlet form.
Let $j_1>\cdots>j_r$ list the neighbors of $i$ that appear earlier in
$\pi$; then, each $j_\ell<i$ and $r\leq\Delta$. Iterating
\eqref{eq:variance-nonneighbor}-\eqref{eq:variance-neighbor} until reaching $P_i$, we obtain
\begin{align*}
\sqrt{
 \mathcal E_i(P_{1:n}f,P_{1:n}f)
}
&\leq
\sqrt{
 \mathcal E_i(P_{i:n}f,P_{i:n}f)
}
+\sum_{\ell=1}^r
 \sqrt{
  \mathcal E_{j_\ell}
  (P_{j_\ell+1:n}f,P_{j_\ell+1:n}f)
 }.
\end{align*}
Now, since $Q_iP_i=0$ by Lemma~\ref{lem:projection}(\ref{lem:projection:0}), we get
\begin{align*}
\mathcal E_i(P_{i:n}f,P_{i:n}f)
=
\bigl\|Q_iP_{i:n}f\bigr\|_{2,\mu}^2
=
\bigl\|Q_iP_iP_{i+1:n}f\bigr\|_{2,\mu}^2
=0.
\end{align*}
Therefore,
\begin{equation*}
\sqrt{
 \mathcal E_i(P_\pi f,P_\pi f)
}
\leq
\sum_{\ell=1}^r
 \sqrt{
  \mathcal E_{j_\ell}
  (P_{j_\ell+1:n}f,P_{j_\ell+1:n}f)
 },
\end{equation*}
and the Cauchy--Schwarz inequality gives
\begin{align}
\mathcal E_i(P_\pi f,P_\pi f)
&\leq
r\sum_{\ell=1}^r
 \mathcal E_{j_\ell}
 (P_{j_\ell+1:n}f,P_{j_\ell+1:n}f)
\leq
\Delta
\sum_{{j<i,j\sim i}}
\mathcal E_j(P_{j+1:n}f,P_{j+1:n}f).
\label{eq:local-charge}
\end{align}
Summing \eqref{eq:local-charge} over $i$ yields
\begin{align}
\sum_{i=1}^n
 \mathcal E_i(P_\pi f,P_\pi f)
&\leq
\Delta
\sum_{i=1}^n
\sum_{{j<i,j\sim i}}
 \mathcal E_j(P_{j+1:n}f,P_{j+1:n}f)
\leq
\Delta^2
\sum_{j=1}^n
 \mathcal E_j(P_{j+1:n}f,P_{j+1:n}f),
\label{eq:global-charge}
\end{align}
since for each fixed $j$, the term
$\mathcal E_j(P_{j+1:n}f,P_{j+1:n}f)$
appears once for every neighbor $i>j$, and there are at most $\Delta$
such neighbors. Note that the index $j=n$ does not occur in the double sum, since
there is no $i>n$, but we use the convention $P_{n+1:n}=I$ to add the nonnegative term 
$$\mathcal E_n(P_{n+1:n}f,P_{n+1:n}f)=\mathcal E_n(f,f)$$
to the final sum. 

The sum on the right-hand side of~\eqref{eq:global-charge} telescopes since
\begin{align*}
 \mathcal E_j(P_{j+1:n}f,P_{j+1:n}f)
 &=\langle P_{j+1:n}f,P_{j+1:n}f\rangle_\mu
  -\langle P_{j+1:n}f,P_jP_{j+1:n}f\rangle_\mu \\
  &= \langle P_{j+1:n}f,P_{j+1:n}f\rangle_\mu
  -\langle P_{j:n}f,P_{j:n}f\rangle_\mu,
\end{align*}
and therefore
\begin{equation}\label{eq:dissipation}
 \sum_{j=1}^n\mathcal E_j(P_{j+1:n}f,P_{j+1:n}f)
 =\langle f,f\rangle_\mu-\langle P_\pi f,P_\pi f\rangle_\mu.
\end{equation}
Combining \eqref{eq:variance-step-one}, \eqref{eq:global-charge}, and
\eqref{eq:dissipation} we obtain
\[
 \langle P_\pi f,P_\pi f\rangle_\mu
 \leq C\Delta^2
 \bigl(\langle f,f\rangle_\mu-\langle P_\pi f,P_\pi f\rangle_\mu\bigr).
\]
Rearranging proves that
\begin{equation}\label{eq:one-sweep-quadratic}
 \Var_\mu ( P_\pi f) = \langle P_\pi f,P_\pi f\rangle_\mu
 \leq \Big(1 - \frac{1}{1+C\Delta^2}\Big) \langle f,f\rangle_\mu = \Big(1 - \frac{1}{1+C\Delta^2}\Big) \Var_\mu(f).
\end{equation}

For an arbitrary $f:\Omega\to\mathbb R$, since $P_\pi$ preserves constants,
$P_\pi (f-\mu[f])=P_\pi f-\mu[f]$, so applying
\eqref{eq:one-sweep-quadratic} to $f-\mu[f]$ gives the result for arbitrary functions $f:\Omega\to\mathbb R$.
\end{proof}

\section{Contraction of entropy under the systematic scan dynamics}
\label{sec:ent}

We start by recording several definitions and facts related to the entropy functional that are needed in our proof. 
For a subset of vertices $A\subseteq V$, let
$\mathcal F_A$ denote the
$\sigma$-algebra generated by the spins outside of $A$.
The conditional entropy is defined as
\begin{align}
 \Ent_A(f)
 &:=\mu[f\log f\mid\mathcal F_A]
   -\mu[f\mid\mathcal F_A]\log\mu[f\mid\mathcal F_A]
 =\mu\!\Big[
     f\log\frac{f}{\mu[f\mid\mathcal F_A]}
     \,\Big|\,\mathcal F_A
   \Big].\label{eq:block-entropy}
\end{align}
This is a function of the spins outside $A$; indeed, after fixing those spins, $\Ent_A(f)$ is the entropy of the function $f$ under the conditional distribution on $A$. We use $\mu[\Ent_A(f)]$ for the
averaged entropy remaining when the spins outside $A$ are fixed.

\begin{lemma}\label{lem:entropy-facts}
For every nonnegative function $f$, the following properties hold.
\begin{enumerate}

\item\label{item:entropy-nonnegative}
$\Ent_\mu(f)\geq0$.

\item\label{item:entropy-homogeneous}
For every $c\geq0$, $\Ent_\mu(cf)=c\Ent_\mu(f)$.


\item\label{item:entropy-chain-rule}
For every $i\in A\subseteq V$, we have
\[
 \Ent_A(f)
 =\Ent_A\!\bigl(\mu[f\mid\mathcal F_i]\bigr)
  +\mu[\Ent_i(f)\mid\mathcal F_A].
\]

\item\label{item:block-domination}
For every $i\in A\subseteq V$,
$\mu[\Ent_i(f)]\leq\mu[\Ent_A(f)]$.
\end{enumerate}
\end{lemma}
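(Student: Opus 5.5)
Nonnegativity and homogeneity follow directly from the definitions. For item~\ref{item:entropy-nonnegative}, the function $\phi(x)=x\log x$ is convex on $[0,\infty)$. Jensen's inequality then gives $\mu[\phi(f)]\geq\phi(\mu[f])$, which is exactly $\Ent_\mu(f)\geq 0$; the convention $0\log 0=0$ handles $f=0$. For item~\ref{item:entropy-homogeneous}, I would expand
\[
\Ent_\mu(cf)=c\,\mu[f\log f]+c\,\mu[f]\log c - c\,\mu[f]\log(c\,\mu[f]).
\]
The $\log c$ terms cancel, and the case $c=0$ is trivial.

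The chain rule (item~\ref{item:entropy-chain-rule}) is the main computation. Write $g:=\mu[f\mid\mathcal F_i]$, which is $P_if$. Since $i\in A$, the $\sigma$-algebra $\mathcal F_A$ is coarser than $\mathcal F_i$. The tower property therefore gives $\mu[g\mid\mathcal F_A]=\mu[f\mid\mathcal F_A]$. I would then split the logarithm pointwise:
\[
f\log\frac{f}{\mu[f\mid\mathcal F_A]}=f\log\frac{f}{g}+f\log\frac{g}{\mu[f\mid\mathcal F_A]}.
\]
Take $\mu[\cdot\mid\mathcal F_A]$ of both sides. The first term becomes $\mu\bigl[\mu[f\log(f/g)\mid\mathcal F_i]\mid\mathcal F_A\bigr]=\mu[\Ent_i(f)\mid\mathcal F_A]$ by the tower property. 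In the second term the factor $\log(g/\mu[f\mid\mathcal F_A])$ is $\mathcal F_i$-measurable, so conditioning on $\mathcal F_i$ first replaces $f$ by $g$. This yields $\mu[g\log(g/\mu[g\mid\mathcal F_A])\mid\mathcal F_A]=\Ent_A(g)$. The only delicate point is zeros: where $g=0$ we have $f=0$ almost surely on that fiber, and the convention $0\log 0=0$ makes every term vanish there. I would dispose of this in one sentence.

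Item~\ref{item:block-domination} follows by taking full expectations in item~\ref{item:entropy-chain-rule}. This gives $\mu[\Ent_A(f)]=\mu[\Ent_A(g)]+\mu[\Ent_i(f)]$. The term $\Ent_A(g)$ is pointwise nonnegative, because it is the entropy of $g$ under the conditional measure on $A$ and item~\ref{item:entropy-nonnegative} applies to that conditional measure. Dropping it gives the claim.
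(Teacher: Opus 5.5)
Your proof is correct: Jensen gives nonnegativity, the $\log c$ terms cancel for homogeneity, and the chain rule follows from the pointwise splitting $\log\frac{f}{\mu[f\mid\mathcal F_A]}=\log\frac{f}{g}+\log\frac{g}{\mu[f\mid\mathcal F_A]}$ together with the tower property (using $\mathcal F_A\subseteq\mathcal F_i$), after which block domination follows by dropping the pointwise nonnegative term $\Ent_A(g)$. The paper does not prove this lemma itself; it cites the Caputo--Parisi entropy decomposition (applied under the conditional law on $A$) and a more general block-monotonicity lemma in BCCPSV22, and your self-contained argument, including the zero cases (where $g=0$ forces $f=0$ on the fiber), is the standard derivation behind those citations.
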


Parts~\ref{item:entropy-nonnegative} and~\ref{item:entropy-homogeneous} of this lemma are standard facts about the entropy functional. Part~\ref{item:entropy-chain-rule} is the entropy decomposition
in \cite[Lemma~3.1]{CaputoParisi}, applied under the conditional law on $A$; 
see also \cite[Lemma~2.3]{BCCPSV22}.
Part~\ref{item:block-domination} is a special case of
\cite[Lemma~2.4]{BCCPSV22}.
 We also use the following two-site specialization of
\cite[Lemma~4.2]{chenliu2023optimal}. 
\begin{lemma}
\label{lem:two-site}
Suppose that $\mu$ is $b$-marginally bounded and totally connected.
Then, for every edge $\{i,j\}$ and every $f\geq0$,
\begin{equation*}
 \Ent_{\{i,j\}}(f)
 \leq
 \frac{6\log(1/b)}{b^6}\,
 \mu\!\left[
   \Ent_i(f)+\Ent_j(f)
   \,\middle|\,
   \mathcal F_{\{i,j\}}
 \right].
\end{equation*}
\end{lemma}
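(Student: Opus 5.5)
The plan is to fix the configuration $\tau$ outside $\{i,j\}$ and work with the conditional law $\nu:=\mu(\cdot\mid\sigma_{V\setminus\{i,j\}}=\tau)$ on the finite set $\Omega^\tau_{\{i,j\}}$. In this notation the claim reads
\[
\Ent_\nu(f)\le K\bigl(\nu[\Ent_i f]+\nu[\Ent_j f]\bigr),\qquad K=\frac{6\log(1/b)}{b^6}.
\]
This is a functional inequality on a state space of bounded size. I would derive it from a log-Sobolev inequality for the two-site heat-bath chain $P_{ij}:=\tfrac12(P_i+P_j)$ under $\nu$.

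\textbf{Step 1: the small space.} By $b$-marginal boundedness, $\nu(\eta)=\nu(\eta_i)\,\nu(\eta_j\mid\eta_i)\ge b^2$ for every $\eta$ in the support. Hence $|\Omega^\tau_{\{i,j\}}|\le b^{-2}$ and $\nu_*:=\min\nu\ge b^2$. Every allowed transition of $P_{ij}$ has conditional probability at least $b$. Its edge weight $Q(\eta,\eta')=\nu(\eta)P_{ij}(\eta,\eta')$ is therefore at least $b^3/2$.

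\textbf{Step 2: spectral gap.} Total connectedness, applied with $S=\{i,j\}$, says the support is connected by single-coordinate changes, i.e.\ by moves of $P_{ij}$. I would bound $\mathrm{gap}(P_{ij})$ by the canonical-paths method. Choose one path between each pair of states, of length at most $|\Omega|\le b^{-2}$. The congestion through any edge is then at most
\[
\frac{1}{\min Q}\,\sum_{\eta,\eta'}\nu(\eta)\nu(\eta')\,|\gamma_{\eta\eta'}|\le \frac{2}{b^3}\cdot b^{-2}.
\]
This gives a gap of order $b^5$, or $b^6$ after cruder counting.

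\textbf{Step 3: from gap to log-Sobolev.} The Diaconis--Saloff-Coste comparison gives the log-Sobolev constant
\[
\alpha\ge \mathrm{gap}\cdot\frac{1-2\nu_*}{\log(1/\nu_*-1)}\gtrsim \frac{\mathrm{gap}}{2\log(1/b)},
\]
which is the source of the $\log(1/b)$ factor. The log-Sobolev inequality bounds $\Ent_\nu(f)$ by $\alpha^{-1}\mathcal E_{P_{ij}}(\sqrt f,\sqrt f)$. Since $P_{ij}=\tfrac12(P_i+P_j)$, this Dirichlet form is at most $\nu[\Var_i(\sqrt f)]+\nu[\Var_j(\sqrt f)]$ up to a constant factor. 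Finally I would use the standard pointwise inequality $\Var(\sqrt g)\le \Ent(g)$ under each single-site conditional law. Integrating over $\nu$ gives the claim with $K\approx 1/\alpha$.

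\textbf{Main obstacle.} The main difficulty is the constant bookkeeping in Steps 2--3 needed to land exactly on $6\log(1/b)/b^6$. If canonical paths come out too lossy, the fallback is to follow \cite[Lemma~4.2]{chenliu2023optimal} directly. That argument bounds the block entropy by a chain-rule decomposition (Lemma~\ref{lem:entropy-facts}(\ref{item:entropy-chain-rule})) along a path of single-site moves. Each step there loses a factor $\log(1/b)/b^{O(1)}$, controlled by the marginal lower bound.
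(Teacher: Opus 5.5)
Your argument is correct. It is essentially the standard proof behind the result the paper imports here: the paper gives no proof of its own and simply cites the two-site case of \cite[Lemma~4.2]{chenliu2023optimal}, and your route (canonical-path spectral gap on the conditional two-site space, then the Diaconis--Saloff-Coste gap-to-log-Sobolev comparison, then $\Var(\sqrt g)\leq\Ent(g)$) is the usual way such bounds are obtained.

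The constant bookkeeping you flag as the main obstacle closes with room to spare.
\begin{itemize}
\item If $b>1/2$, then $\nu$ is a point mass and there is nothing to prove.
\item If $b\leq 1/2$, use that $p\mapsto(1-2p)/\log(1/p-1)$ is increasing on $(0,1/2)$, so you may substitute $\nu_*\geq b^2$. This gives gap $\geq b^5/2$ and $\alpha\geq b^5/(8\log(1/b))$.
\item Since $\mathcal E_{P_{ij}}$ is exactly half the sum of the two single-site Dirichlet forms, the constant is $1/(2\alpha)\leq 4\log(1/b)/b^5\leq 6\log(1/b)/b^6$.
\end{itemize}
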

Next, we record a monotonicity property of the entropy functional under
commuting heat-bath updates. Related estimates appear in
\cite[Eq.~(3.6)]{CaputoParisi} and
\cite[Lemma~2.7]{BCCPSV22}. In those settings, the
required commutation relation follows from an underlying product
structure, and the inequalities are proved using the variational
characterization of entropy instead. Since we assume commutation directly, we include a self-contained proof based on the log-sum inequality in Section~\ref{subsec:mono}.

\begin{lemma}
\label{lem:commuting-entropy}
If $P_iP_j=P_jP_i$, then for every $f\geq0$, we have
\begin{equation*}
 \mu[\Ent_i(P_jf)]\leq\mu[\Ent_i(f)].
\end{equation*}
\end{lemma}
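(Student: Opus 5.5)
The plan is to compare $\mu[\Ent_i(P_jf)]$ and $\mu[\Ent_i(f)]$ pointwise in the spins outside $\{i,j\}$, after conditioning on $\mathcal F_{\{i,j\}}$. First, the case $i=j$ is immediate, since $\Ent_i(P_if)=0$ because $P_if$ does not depend on $\sigma_i$. So assume $i\neq j$.

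\emph{Step 1: identify what commutation gives.} Fix a configuration $\xi$ outside $\{i,j\}$ and write $\nu=\mu(\cdot\mid\sigma_{V\setminus\{i,j\}}=\xi)$, a distribution on pairs $(a,c)=(\sigma_i,\sigma_j)$. Then $P_jf(a,c)=\sum_{c'}\nu(c'\mid a)f(a,c')=:g(a)$, a function of $a$ alone. Likewise $P_if(a,c)=\sum_{a'}\nu(a'\mid c)f(a',c)$. Applying $P_iP_j=P_jP_i$ to indicator functions of single configurations within the $\xi$-fibre yields
\[
\nu(a'\mid c)\,\nu(c'\mid a')=\nu(c'\mid a)\,\nu(a'\mid c')\quad\text{for all }a,a',c,c'.
\]
From this I expect to deduce that $\nu(c'\mid a)$ does not depend on $a$ on the support, i.e.\ $\nu$ is a product measure $\alpha\otimes\gamma$ restricted to a rectangle. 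This is the step I expect to be the main obstacle, because zero entries of $\nu$ must be handled with care. The approach I would try first is the following. Fix $(a,c)$ in the support. Sum the identity over $a'$, and then over $c'$, to show that $\nu(c'\mid a)=\nu(c')$ and that the support is a product set. Alternatively, one can work directly with the commuting-projections picture: $P_iP_j$ is then a conditional expectation onto functions measurable with respect to both $\sigma(\sigma_{V\setminus i})$ and $\sigma(\sigma_{V\setminus j})$.

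\emph{Step 2: reduce to the log-sum inequality.} Under product structure, $\Ent_i(f)$ at $(\xi,c)$ is the entropy of $a\mapsto f(a,c)$ under $\alpha$, while $\Ent_i(P_jf)$ is the entropy of $g(a)=\sum_c\gamma(c)f(a,c)$ under $\alpha$. Averaging over $c\sim\gamma$, the claim becomes
\[
\Ent_\alpha\Bigl(\sum_c\gamma(c)f(\cdot,c)\Bigr)\le\sum_c\gamma(c)\Ent_\alpha(f(\cdot,c)).
\]
This is the convexity of $\Ent_\alpha$, and it is also the form in which the log-sum inequality appears. To see this, write $\Ent_\alpha(h)=\sum_a\alpha(a)h(a)\log\frac{h(a)}{\alpha[h]}$. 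For each fixed $a$, apply the log-sum inequality
\[
\Bigl(\sum_c x_c\Bigr)\log\frac{\sum_c x_c}{\sum_c y_c}\le\sum_c x_c\log\frac{x_c}{y_c}
\]
with $x_c=\gamma(c)f(a,c)$ and $y_c=\gamma(c)\alpha[f(\cdot,c)]$. Then multiply by $\alpha(a)$ and sum over $a$.

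\emph{Step 3: conclude.} Finally, average over $\xi$ to obtain $\mu[\Ent_i(P_jf)]\le\mu[\Ent_i(f)]$. Throughout, I would adopt the conventions $0\log 0=0$ and treat configurations with $\alpha[f(\cdot,c)]=0$ separately; these contribute zero on both sides.
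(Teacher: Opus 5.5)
Your Step~1 is where the argument breaks: commutation alone does not make $\nu$ a product measure on a rectangle. In the kernel identity $\nu(a'\mid c)\nu(c'\mid a')=\nu(c'\mid a)\nu(a'\mid c')$, which must hold for all $(a,c)$ in the support of $\nu$, the left side depends only on $c$ and the right side only on $a$. This forces both sides to be constant only along connected components of the support under single-coordinate moves. Equivalently, $P_iP_j$ is the conditional expectation onto $\mathcal F_i\cap\mathcal F_j$, and this $\sigma$-algebra can be strictly larger than $\mathcal F_{\{i,j\}}$, because it also records which component $(\sigma_i,\sigma_j)$ lies in.

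Here is a concrete counterexample. Take $V=\{i,j,k\}$, with $\sigma_i,\sigma_j\in\{1,2\}$ and $\sigma_k\in\{0,1\}$, and let $\mu$ be uniform on $\{\sigma_k=1\}\cup\{(1,1,0),(2,2,0)\}$, where configurations are listed as $(\sigma_i,\sigma_j,\sigma_k)$. This $\mu$ is globally connected, and $P_i$ and $P_j$ commute: on $\{\sigma_k=1\}$ both are averages under a product measure, and on $\{\sigma_k=0\}$ both act as the identity. Yet given $\sigma_k=0$, $\nu$ is uniform on the diagonal $\{(1,1),(2,2)\}$. Your suggested summation does not get around this. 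Summing over $a'$ only gives $\nu(c'\mid a)=\sum_{a'}\nu(a'\mid c)\nu(c'\mid a')$, i.e., $\nu(c'\mid a)$ is a function of $c$ along the support, and concluding that it is constant requires the support to be connected. The lemma assumes nothing beyond commutation; total connectedness is a hypothesis of Theorem~\ref{thm:ordered}, not of the lemma. So as written you prove the lemma only when two-site conditional supports are connected. That does hold where the lemma is applied, but not in the stated generality.

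You can repair this in two ways. One option is to push the kernel identity further: show that $\nu$ is a mixture of product measures on disjoint rectangles (the connected components of its support), and run Step~2 on each rectangle separately. The simpler option is to drop Step~1 altogether. Your log-sum inequality, applied with $x_c=\nu(c\mid a)f(a,c)$ and $y_c=\nu(c\mid a)\,P_if(a,c)$, needs no structure of $\nu$ and gives, pointwise,
\[
P_jf\,\log\frac{P_jf}{P_jP_if}\le P_j\Bigl[f\log\frac{f}{P_if}\Bigr].
\]
The hypothesis $P_jP_if=P_iP_jf$ turns the left side into the integrand of $\mu[\Ent_i(P_jf)]=\mu\bigl[P_jf\log(P_jf/P_iP_jf)\bigr]$. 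Averaging the right side gives $\mu[f\log(f/P_if)]=\mu[\Ent_i(f)]$. This is exactly the paper's proof, with zeros handled by replacing $f$ by $f+\varepsilon$. Commutation is used only as the operator identity applied to $f$, so no description of $\nu$ is needed. Under a product structure your $y_c$ coincides with this choice, so your Step~2 is already the paper's inequality, and Step~1 is both unnecessary and, in general, false.
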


We are now ready to show that the systematic scan dynamics contracts the entropy functional when approximate tensorization of entropy holds. 

\begin{theorem}
\label{thm:ordered}
Suppose $\mu$ satisfies $C$-approximate tensorization of entropy, and that
it is $b$-marginally bounded, totally connected, and admits a commutation graph of maximum degree at most $\Delta$. For every scan order $\pi$ and every
function $f\geq0$
\begin{equation}\label{eq:ordered-contraction}
\Ent_\mu(P_\pi f)
\leq
\left(
1-\frac{1}{
1+C A
}
\right)
\Ent_\mu(f),
\end{equation}
where $A = \Delta
\left(\frac{6\log(1/b)}{b^6}\right)^\Delta$.
\end{theorem}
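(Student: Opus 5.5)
We will mirror the proof of Theorem~\ref{thm:product}, replacing Dirichlet forms by averaged conditional entropies. Set $E_i(g):=\mu[\Ent_i(g)]$. The key identity is $E_j(g)=\Ent_\mu(g)-\Ent_\mu(P_jg)$; it follows from Lemma~\ref{lem:entropy-facts}(\ref{item:entropy-chain-rule}) with $A=V$, since $P_jg=\mu[g\mid\mathcal F_j]$. Approximate tensorization applied to $P_\pi f$ then gives
\[
\Ent_\mu(P_\pi f)\le C\sum_i E_i(P_\pi f).
\]
The same identity gives the telescoping sum
\[
\sum_{j=1}^n E_j(P_{j+1:n}f)=\Ent_\mu(f)-\Ent_\mu(P_\pi f).
\]
It therefore suffices to prove the local charge
\[
\sum_i E_i(P_\pi f)\le A\sum_j E_j(P_{j+1:n}f).
\]
Rearranging then yields the contraction factor $1-\frac1{1+CA}$ in \eqref{eq:ordered-contraction}.

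To bound $E_i(P_{1:n}f)$, we peel off $P_1,P_2,\dots$ in turn. If $j\not\sim i$, Lemma~\ref{lem:commuting-entropy} gives $E_i(P_{j:n}f)\le E_i(P_{j+1:n}f)$. If $j\sim i$ with $j<i$, we argue as follows, writing $g=P_{j+1:n}f$. By Lemma~\ref{lem:entropy-facts}(\ref{item:block-domination}), $E_i(P_jg)\le\mu[\Ent_{\{i,j\}}(P_jg)]$. By the chain rule, $\mu[\Ent_{\{i,j\}}(P_jg)]\le\mu[\Ent_{\{i,j\}}(g)]$, because $\Ent_{\{i,j\}}(g)=\Ent_{\{i,j\}}(P_jg)+\mu[\Ent_j(g)\mid\mathcal F_{\{i,j\}}]$. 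Lemma~\ref{lem:two-site} then gives
\[
E_i(P_jg)\le K\bigl(E_i(g)+E_j(g)\bigr),\qquad K=\frac{6\log(1/b)}{b^6}\ge1.
\]
So a neighboring update multiplies the current quantity by $K$ and adds $K$ times its own dissipation term $E_j(P_{j+1:n}f)$. We iterate this down to $P_i$, where $E_i(P_{i:n}f)=0$. Let $j_1<\dots<j_r$ be the earlier neighbors of $i$, with $r\le\Delta$. This produces
\[
E_i(P_\pi f)\le\sum_{\ell=1}^r K^{\ell}E_{j_\ell}(P_{j_\ell+1:n}f)\le K^\Delta\sum_{j<i,\,j\sim i}E_j(P_{j+1:n}f).
\]

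We then sum over $i$. Each $j$ appears for at most $\Delta$ indices $i>j$, so
\[
\sum_i E_i(P_\pi f)\le\Delta K^\Delta\sum_j E_j(P_{j+1:n}f)=A\sum_j E_j(P_{j+1:n}f),
\]
where the term $j=n$ is added as a nonnegative term. Combined with the first paragraph, this gives the claimed contraction.

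The main obstacle is the neighbor step. In the variance proof, square roots of Dirichlet forms obey a triangle inequality with constant one. For entropy no such additive structure is available, so the step has to go through the pair entropy $\Ent_{\{i,j\}}$. There we must check carefully that $\mu[\Ent_{\{i,j\}}(\cdot)]$ does not increase under $P_j$; the chain rule, applied conditionally on $\mathcal F_{\{i,j\}}$, should give exactly that. The price of routing through the pair entropy is a multiplicative factor $K$ per neighbor, which is the source of the exponential dependence $K^\Delta$ in $A$.
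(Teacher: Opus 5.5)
Your proposal is correct and follows the paper's proof essentially step for step: tensorization applied to $P_\pi f$, peeling non-neighbors via Lemma~\ref{lem:commuting-entropy}, and routing each earlier neighbor through $\Ent_{\{i,j\}}$ via block domination, the chain rule and Lemma~\ref{lem:two-site}. The rest also matches: your coefficients $K^\ell$ for $j_1<\dots<j_r$ are the paper's $K^{r-\ell+1}$ under its reversed labeling, $\mu[\Ent_i(P_{i:n}f)]$ vanishes, double counting gives a factor $\Delta$, and the sum telescopes. You assert $K\ge 1$ without comment; the paper gets it by assuming $b\le 1/2$ without loss of generality, which is automatic when $|\Omega|\ge 2$, since connectedness gives a coordinate with two feasible values under a feasible pinning and hence $2b\le 1$.
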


\begin{proof}
Fix $\pi$ and $f\geq0$. 
From the assumption that $\mu$ satisfies $C$-approximate tensorization of entropy, we get
\begin{equation}\label{eq:entropy-step-one}
 \Ent_\mu(P_\pi f)
 \leq C\sum_{i=1}^n\mu[\Ent_i(P_\pi f)].
\end{equation}
If $j\not\sim i$, by Lemma~\ref{lem:commuting-entropy} we have
\begin{equation}\label{eq:entropy-nonneighbor}
 \mu[\Ent_i(P_{j:n}f)]
 \leq
 \mu[\Ent_i(P_{j+1:n}f)].
\end{equation}
If $j\sim i$, then
\begin{align}
 \mu[\Ent_i(P_{j:n}f)]
 &\leq
 \mu[\Ent_{\{i,j\}}(P_{j:n}f)]
 &&
 \text{Lemma~\ref{lem:entropy-facts}
       (\ref{item:block-domination})}
 \notag\\
 &=
 \mu[\Ent_{\{i,j\}}(P_{j+1:n}f)]
 -
 \mu[\Ent_j(P_{j+1:n}f)]
 &&
 \text{Lemma~\ref{lem:entropy-facts}
       (\ref{item:entropy-chain-rule})}
 \notag\\
 &\leq
 \mu[\Ent_{\{i,j\}}(P_{j+1:n}f)]
 &&
 \text{Lemma~\ref{lem:entropy-facts}
       (\ref{item:entropy-nonnegative})}
 \notag\\
 &\leq
 \frac{6\log(1/b)}{b^6}
 \bigl(
   \mu[\Ent_i(P_{j+1:n}f)]
   +
   \mu[\Ent_j(P_{j+1:n}f)]
 \bigr)
 &&
 \text{Lemma~\ref{lem:two-site}}.
 \label{eq:neighbor-recurrence}
\end{align}

Let $j_1>\cdots>j_r$ list the neighbors of vertex $i$ that appear earlier in $\pi$; then, $j_\ell<i$, and $r\leq\Delta$. 
Iterating \eqref{eq:entropy-nonneighbor}-\eqref{eq:neighbor-recurrence} until reaching $P_i$, we obtain
\begin{align}
 \mu[\Ent_i(P_{1:n}f)]
 &\leq
 \left(\frac{6\log(1/b)}{b^6}\right)^r
 \mu[\Ent_i(P_{i:n}f)]
 +
 \sum_{\ell=1}^r
 \left(\frac{6\log(1/b)}{b^6}\right)^{r-\ell+1}
 \mu[\Ent_{j_\ell}(P_{j_\ell+1:n}f)].
 \label{eq:entropy-iteration}
\end{align}

By Lemma~\ref{lem:entropy-facts}(\ref{item:entropy-chain-rule}), applied
to $P_{i:n}f$, we have
\begin{align*}
 \mu[\Ent_i(P_{i:n}f)]
 &=
 \Ent_\mu(P_{i:n}f)
 -
 \Ent_\mu\!\left(
   \mu[P_{i:n}f\mid\mathcal F_i]
 \right)\\
 &=
 \Ent_\mu(P_{i:n}f)
 -
 \Ent_\mu(P_iP_{i:n}f)\\
 &=
 \Ent_\mu(P_{i:n}f)-\Ent_\mu(P_{i:n}f)
 =
 0.
\end{align*}
Thus the first term on the right-hand side of
\eqref{eq:entropy-iteration} vanishes and we obtain
\[
 \mu[\Ent_i(P_{1:n}f)]
 \leq
 \sum_{\ell=1}^r
 \left(\frac{6\log(1/b)}{b^6}\right)^{r-\ell+1}
 \mu[\Ent_{j_\ell}(P_{j_\ell+1:n}f)].
\]
We shall assume without loss of generality that $b\leq1/2$. Then, $\frac{6\log(1/b)}{b^6}\geq1$.
Moreover, $r\leq\Delta$, and the local entropic terms in the summation are nonnegative by
Lemma~\ref{lem:entropy-facts}(\ref{item:entropy-nonnegative}).
Hence,
\begin{equation}\label{eq:entropy-local-charge}
 \mu[\Ent_i(P_\pi f)]
 \leq
 \left(\frac{6\log(1/b)}{b^6}\right)^\Delta
 \sum_{{j<i,j\sim i}}
 \mu[\Ent_j(P_{j+1:n}f)].
\end{equation}
Summing over $i$ gives
\begin{align}
 \sum_{i=1}^n\mu[\Ent_i(P_\pi f)]
 &\leq
 \left(\frac{6\log(1/b)}{b^6}\right)^\Delta
 \sum_{i=1}^n
 \sum_{{j<i,j\sim i}}
 \mu[\Ent_j(P_{j+1:n}f)]\notag\\
 &\leq
 \Delta
 \left(\frac{6\log(1/b)}{b^6}\right)^\Delta
 \sum_{j=1}^{n}
 \mu[\Ent_j(P_{j+1:n}f)],\label{eq:main:ineq}
\end{align}
because for each fixed $j$, the term
$\mu[\Ent_j(P_{j+1:n}f)]$ appears once for every neighbor $i>j$.
We remark that the index $j=n$ does not appear in the double sum because there is no index $i>n$. Nevertheless, with the convention that $P_{n+1:n}=I$, we have $\mu[\Ent_n(P_{n+1:n}f)]=\mu[\Ent_n(f)] \ge 0$, so this term can be added to the right-hand
side; this is useful because the sum on the right now
contains the entropy loss from every update in the scan.

By Lemma~\ref{lem:entropy-facts}(\ref{item:entropy-chain-rule}),
\begin{align}
 \mu[\Ent_i(P_{i+1:n}f)]
 &=
 \Ent_\mu(P_{i+1:n}f)
 -
 \Ent_\mu\!\left(
   \mu[P_{i+1:n}f\mid\mathcal F_i]
 \right)
 \notag\\
 &=
 \Ent_\mu(P_{i+1:n}f)
 -
 \Ent_\mu(P_{i:n}f),
 \label{eq:one-update-entropy-loss}
\end{align}
since $
 \mu[P_{i+1:n}f\mid\mathcal F_i]
 =
 P_iP_{i+1:n}f
 =
 P_{i:n}f.$
Then, summing over $i=1,\dots,n$ and telescoping we obtain
\begin{equation}\label{eq:entropy-loss}
 \sum_{i=1}^{n}\mu[\Ent_i(P_{i+1:n}f)]
 =\Ent_\mu(f)-\Ent_\mu(P_\pi f).
\end{equation}
Combining \eqref{eq:entropy-step-one},
\eqref{eq:main:ineq}, and \eqref{eq:entropy-loss} gives
\[
 \Ent_\mu(P_\pi f)
 \leq  C\Delta
 \left(\frac{6\log(1/b)}{b^6}\right)^\Delta\bigl(\Ent_\mu(f)-\Ent_\mu(P_\pi f)\bigr),
\]
and rearranging proves \eqref{eq:ordered-contraction}.
\end{proof}

We conclude this section with the proof of Theorem~\ref{thm:entropy}.

\begin{proof}[Proof of Theorem~\ref{thm:entropy}]
    Follows from Theorem~\ref{thm:ordered} and~\eqref{eq:mixing}.
\end{proof}

\subsection{Monotonicity of entropy under commuting updates: proof of Lemma~\ref{lem:commuting-entropy}}
\label{subsec:mono}

We provide in this section the proof Lemma~\ref{lem:commuting-entropy} thus completing the proof of Theorem~\ref{thm:ordered}.

\begin{proof}[Proof of Lemma~\ref{lem:commuting-entropy}]
First suppose that $f>0$, so that all conditional means appearing
below are positive.
Taking expectation in~\eqref{eq:block-entropy}, we obtain
\begin{equation}\label{eq:entropy-loss-log}
 \mu[\Ent_i(f)]
 =
 \mu\!\left[
   f\log\frac{f}{\mu[f\mid\mathcal F_i]}
 \right].
\end{equation}

For positive numbers
$a_1,\ldots,a_r$ and $b_1,\ldots,b_r$, and weights
$\theta_k\geq0$ satisfying $\sum_k\theta_k=1$, the log-sum inequality (see, e.g.,~\cite[Theorem~2.7.1]{CT06}) yields that
\begin{equation}\label{eq:log-sum}
 \left(\sum_k\theta_k a_k\right)
 \log\frac{\sum_k\theta_k a_k}{\sum_k\theta_k b_k}
 \leq
 \sum_k\theta_k a_k\log\frac{a_k}{b_k}.
\end{equation}

Fix the spins outside $j$ and apply \eqref{eq:log-sum} with
the conditional probabilities of the spin at $j$ as weights.
Take $a_k$ to be the corresponding values of $f$, and $b_k$
to be the corresponding values of $\mu[f\mid\mathcal F_i]$.
The two weighted averages on the left are then
$\mu[f\mid\mathcal F_j]$ and
$\mu[\mu[f\mid\mathcal F_i]\mid\mathcal F_j]$.
Consequently,
\begin{equation}
\label{log:ineq}
 \mu[f\mid\mathcal F_j]\,
 \log
 \frac{\mu[f\mid\mathcal F_j]}
      {\mu[\mu[f\mid\mathcal F_i]\mid\mathcal F_j]}
 \leq
 \mu\!\left[
   f\log\frac{f}{\mu[f\mid\mathcal F_i]}
   \,\middle|\,\mathcal F_j
 \right].
\end{equation}

Since $P_i$ and $P_j$ are
the conditional-expectation operators given $\mathcal F_i$
and $\mathcal F_j$ (i.e., $P_if=\mu[f\mid\mathcal F_i]$ and $P_jf=\mu[f\mid\mathcal F_j]$), the assumption that $P_iP_j=P_jP_i$ implies that
\[
 \mu[\mu[f\mid\mathcal F_i]\mid\mathcal F_j]
 =
 \mu[\mu[f\mid\mathcal F_j]\mid\mathcal F_i].
\]
It then follows from~\eqref{eq:entropy-loss-log} that
\begin{align*}
 \mu[\Ent_i(P_jf)]
 = \mu\!\left[
   \mu[f\mid\mathcal F_j]\,
   \log
   \frac{\mu[f\mid\mathcal F_j]}
        {\mu[\mu[f\mid\mathcal F_j]\mid\mathcal F_i]}
 \right]= 
 \mu\!\left[
   \mu[f\mid\mathcal F_j]\,
   \log
   \frac{\mu[f\mid\mathcal F_j]}
        {\mu[\mu[f\mid\mathcal F_i]\mid\mathcal F_j]}
 \right],
\end{align*}
and that 
\begin{align*}
 \mu\!\left[
   \mu\!\left[
     f\log\frac{f}{\mu[f\mid\mathcal F_i]}
     \,\middle|\,\mathcal F_j
   \right]
 \right]
 &=
 \mu\!\left[
   f\log\frac{f}{\mu[f\mid\mathcal F_i]}
 \right]
 =
 \mu[\Ent_i(f)].
\end{align*}
Combining these two last identities, and averaging~\eqref{log:ineq}
yields that
$$
 \mu[\Ent_i(P_jf)] \le \mu[\Ent_i(f)],
$$
as claimed. 

Finally, let us consider the case when $f\geq0$. Apply the result just proved to
$f+\varepsilon$, where $\varepsilon>0$.
Conditional expectation preserves constants, so
$P_j(f+\varepsilon)=P_jf+\varepsilon$, and therefore
\[
 \mu[\Ent_i(P_jf+\varepsilon)]
 \leq
 \mu[\Ent_i(f+\varepsilon)].
\]
Since $x\log x$ extends continuously to zero with value zero,
the definition of conditional entropy shows that both sides
are continuous in $\varepsilon$ on the finite state space.
Letting $\varepsilon\downarrow0$ proves the claim.
\end{proof}

\begin{remark}
We remark that the lemma can also be proved using the data processing inequality for the relative entropy. Indeed, we can apply this inequality to $f$ and $\mu[f\mid\mathcal F_i]$ under the reversible operator $P_j$, and then use commutation to obtain
$P_j\mu[f\mid\mathcal F_i]=\mu[P_jf\mid\mathcal F_i]$; see \cite[Theorem~7.4]{PolyanskiyWu} for this version of the data processing inequality.
\end{remark}

\bibliographystyle{alpha}
\bibliography{references}
\end{document}